\documentclass[11pt]{article}
\usepackage{amsmath,enumerate,amsfonts,color,amssymb,amsthm, verbatim}

\usepackage{hyperref}
\usepackage[normalem]{ulem}

\textwidth 15cm \oddsidemargin 0.75cm \evensidemargin 0.75cm
\addtolength{\textheight}{2cm} \addtolength{\topmargin}{-1cm}

\def\RR{{\mathbb R}}
\def\Sphere{{\mathbb S}}
\def\mcA{{\mycal A}}
\def\mcB{{\mycal B}}
\def\mcH{{\mycal H}}

\def\tf{\tilde f}

\newcommand{\f}{\varphi}

\def\bS{{\mathbb S}}
\def\eps{{\varepsilon}}

\newtheorem{theorem} {\sc  Theorem\rm}

\newtheorem{conjecture}[theorem]{\sc  Open problem \rm}

\newtheorem{remark}[theorem]{\sc  Remark\rm}

\newcounter{marnote}

\DeclareFontFamily{OT1}{rsfs}{}
\DeclareFontShape{OT1}{rsfs}{m}{n}{ <-7> rsfs5 <7-10> rsfs7 <10-> rsfs10}{}
\DeclareMathAlphabet{\mycal}{OT1}{rsfs}{m}{n}

\newcommand{\R}{\mathbb{R}}

\def\be{\begin{equation}}
\def\ee{\end{equation}}
\def\bea#1\eea{\begin{align}#1\end{align}}

\begin{document}
\title{Vortex sheet solutions for the Ginzburg-Landau system\\ in cylinders: symmetry and global minimality}

\author{
{\Large Radu Ignat}\footnote{Institut de Math\'ematiques de Toulouse \& Institut Universitaire de France, UMR 5219, Universit\'e de Toulouse, CNRS, UPS
IMT, F-31062 Toulouse Cedex 9, France. Email: Radu.Ignat@math.univ-toulouse.fr} \and {\Large Mircea Rus}\footnote{Department of Mathematics, Technical University of Cluj-Napoca, 400027 Cluj-Napoca, Romania. Email: rus.mircea@math.utcluj.ro}
}

\maketitle

\begin{abstract}
We consider the Ginzburg-Landau energy $E_\eps$ for $\R^M$-valued maps defined in a cylinder shape domain $B^N\times (0,1)^n$ satisfying a degree-one vortex boundary condition on $\partial B^N\times (0,1)^n$ in dimensions $M\geq N\geq 2$ and $n\geq 1$. The aim is to study the radial symmetry of global minimizers of this variational problem. We prove the following: if $N\geq 7$, then for every $\eps>0$, there exists a unique global minimizer which is given by the non-escaping radially symmetric vortex sheet solution $u_\eps(x,z)=(f_\eps(|x|) \frac{x}{|x|}, 0_{\R^{M-N}})$, $\forall x\in B^N$ that is invariant in $z\in (0,1)^n$. If $2\leq N \leq 6$ and $M\geq N+1$, the following dichotomy occurs between escaping and non-escaping solutions: there exists $\eps_N>0$ such that

$\bullet$ if $\eps\in (0, \eps_N)$, then every global minimizer is an escaping radially symmetric vortex sheet solution of the form $R \tilde u_\eps$ where $\tilde u_\eps(x,z)=(\tf_{\eps}(|x|) \frac{x}{|x|}, 0_{\R^{M-N-1}}, g_{\eps}(|x|))$ is invariant in $z$-direction with $g_\eps>0$ in $(0,1)$ and $R\in O(M)$ is an orthogonal transformation keeping invariant the space $\R^N\times \{0_{\R^{M-N}}\}$;

$\bullet$ if $\eps\geq \eps_N$, then the non-escaping radially symmetric vortex sheet solution $u_\eps(x,z)=(f_\eps(|x|) \frac{x}{|x|}, 0_{\R^{M-N}})$, $\forall x\in B^N, z\in (0,1)^n$ is the unique global minimizer; moreover, there are no bounded escaping solutions in this case.

We also discuss the problem of  vortex sheet $\bS^{M-1}$-valued harmonic maps. 

\bigskip

\noindent {\it Keywords: vortex, uniqueness, symmetry, minimizers, Ginzburg-Landau equation, harmonic maps.}

\noindent {\it MSC: 35A02, 35B06, 35J50.}
\end{abstract}

\tableofcontents

\section{Introduction and main results}

In this paper, we consider the following Ginzburg-Landau type energy functional
\be
\label{en}
E_\eps(u)=\int_{\Omega} \Big[\frac{1}{2}|\nabla u|^2+\frac{1}{2\eps^2}W(1 - |u|^2)\Big]\,dX,
\ee
where $\eps>0$, $X=(x,z)\in \Omega=B^N\times (0,1)^n$ is a cylinder shape domain with $B^N$ the unit ball in $\R^N$, {$n\geq 1$, $N \geq 2$} and the potential $W\in C^2((-\infty,1]; \R)$ satisfies 
\begin{equation}
 W(0)=0,\, W(t)>0 \hbox{ for all } t\in (-\infty, 1]\setminus \{0\} {\text{ and }} W \textrm{ is convex}.
 	\label{Eq:26VI18-E1}
\end{equation}
(The prototype potential is $W(t)=\frac{t^2}2$ for $t\leq 1$.)  We investigate the global minimizers of the energy $E_\eps$ in the set of $\R^N$-valued maps:
$$
\mcA_N:=\{ u\in H^1(\Omega; \R^N):\,  u(x,z)=x \textrm{ for every } x\in \partial B^N = \bS^{N-1}, z\in (0,1)^n\}.
$$ 
The boundary assumption $u(x,z) = x$ for every  $x\in \bS^{N-1}$ and every $z\in (0,1)^n$ is referred in the literature as the degree-one vortex boundary condition.

The direct method in the calculus of variations yields the existence of a global minimizer $u_\eps$ of $E_\eps$ over $\mcA_N$ for all range of $\eps>0$. Moreover, any minimizer $u_\eps$ satisfies $|u_\eps| \leq 1$ in $\Omega$, $u_\eps$  belongs to $C^1(\overline{\Omega}; \R^N)$ and solves the system of PDEs (in the sense of distributions) with mixed Dirichlet-Neumann boundary conditions:
\be
\label{E-L}
\left\{\begin{array}{l}
-\Delta u_\eps=\frac1{\eps^2} u_\eps \, W'(1-|u_\eps|^2)\quad \textrm{ in } \, \Omega,\\
\frac{\partial u_\eps}{\partial z}=0 \quad \textrm{ on } \, B^N\times \partial (0,1)^n,\\
u(x,z)=x \quad \textrm{ on } \, \partial B^N\times (0,1)^n.
\end{array}
\right.
\ee

\subsection{Minimality of the $\R^N$-valued vortex sheet solution}
The first goal of this paper is to prove the uniqueness and radial symmetry of the global minimizer of $E_\eps$ in $\mcA_N$ for \emph{all} $\eps>0$ in dimensions $N \geq 7$ and $n\geq 1$. In fact, in these dimensions, we show that the global minimizer of $E_\eps$ in $\mcA_N$ is unique and given by the following radially symmetric critical point of $E_\eps$ that is invariant in $z$: \footnote{If $n=0$ and $N\geq 2$, then $SO(N)$ induces a group action on $\mcA_N$ given by $u(x)\mapsto R^{-1}u(Rx)$ for every $x\in B^N$, $R\in SO(N)$ and $u\in \mcA_N$ under which the energy $E_\eps$ and the vortex boundary condition are invariant. Then every bounded critical point of $E_\eps$ in $\mcA_N$ that is invariant under this $SO(N)$ group action has the form \eqref{def_sol_equa}, see e.g. \cite[Lemma A.4]{IN}.}
\be
\label{def_sol_equa}
u_\eps(x,z)=f_\eps(|x|)\frac{x}{|x|} \quad \textrm{ for all } x\in B^N \textrm{ and } z\in (0,1)^n,
\ee
where the radial profile $f_\eps : [0,1]\to \R$ in $r=|x|$ is the unique solution to the ODE:
\be
\label{1}
\left\{\begin{array}{l}
-f''_\eps-\frac{N-1}{r}f'_\eps+\frac{N-1}{r^2}f_\eps=\frac{1}{\eps^2} f_\eps \,W'(1-f_\eps^2) \quad 
\text{ for } r \in (0,1),\\
f_\eps(0)=0, f_\eps(1)=1.
\end{array}
\right.
\ee
We recall that the unique radial profile $f_\eps$ satisfies $f_\eps>0$ and $f'_\eps>0$ in $(0,1)$ (see e.g. \cite{HH, ODE_INSZ, IN}). Note that the zero set of $u_\eps$ is given by the $n$-dimensional vortex sheet $\{0_{\R^N}\}\times (0,1)^n$ in $\Omega$ (in particular, if $n=0$, it is a vortex point, while for  $n=1$, it is a vortex filament); therefore, $u_\eps$ in \eqref{def_sol_equa} is called (radially symmetric) {\it vortex sheet solution} to the Ginzburg-Landau system \eqref{E-L}.

\medskip

\begin{theorem}
\label{GL_equator}
Assume that $W$ satisfies \eqref{Eq:26VI18-E1} and $n\geq 1$. If $N\geq 7$, then $u_\eps$ given in \eqref{def_sol_equa} is the unique global minimizer of $E_\eps$  in $\mcA_N$ for every $\eps>0$.
\end{theorem}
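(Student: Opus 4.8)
The plan is to show that any global minimizer $u_\eps$ of $E_\eps$ in $\mathcal A_N$ must coincide with the radially symmetric vortex sheet solution $u_\eps$ in \eqref{def_sol_equa}, and the strategy splits into two parts: first reduce to the $z$-independent case, then handle the resulting problem on $B^N$ by a symmetrization/calibration argument that works precisely when $N\geq 7$.

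First I would reduce to maps that are invariant in $z$. Given a minimizer $u_\eps(x,z)$, averaging in $z$ (or using a reflection/slicing argument with the Neumann conditions $\partial u_\eps/\partial z=0$ on $B^N\times\partial(0,1)^n$) should show that the minimal energy is the product of $|(0,1)^n|=1$ with the minimal energy of the two-dimensional-in-spirit problem on $B^N$ only; more precisely, for any competitor one can freeze the $z$-slice of least energy and extend it constantly in $z$, and strict convexity of the Dirichlet integral in the $z$-derivative forces a genuine minimizer to be $z$-independent. This is the routine part. One is then left to prove: for $N\geq 7$, the map $x\mapsto f_\eps(|x|)x/|x|$ is the unique global minimizer of $\int_{B^N}\big[\tfrac12|\nabla u|^2+\tfrac1{2\eps^2}W(1-|u|^2)\big]\,dx$ among $u\in H^1(B^N;\R^N)$ with $u=x$ on $\bS^{N-1}$.

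For this core step, the natural approach is the one that makes the threshold $N\geq 7$ appear: write $u=\rho\,v$ with $\rho=|u|\in[0,1]$ and $v=u/|u|\in\bS^{N-1}$ (where $\rho>0$), so that $|\nabla u|^2=|\nabla\rho|^2+\rho^2|\nabla v|^2$, and bound $\int_{B^N}\rho^2|\nabla v|^2$ from below. The key is that $x/|x|:B^N\to\bS^{N-1}$ is, for $N\geq 7$, a \emph{minimizing} $\bS^{N-1}$-valued harmonic map with the identity boundary data (this is the classical result of Jäger–Kaul / Brezis–Coron–Lieb: the equator/radial map $x/|x|$ minimizes the Dirichlet energy into $\bS^{N-1}$ among maps equal to $x/|x|$ on $\bS^{N-1}$ exactly when $N\geq 7$). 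One then uses a calibration-type inequality: there is a lower bound $\int_{B^N}|\nabla v|^2\,dx\geq (N-1)\int_{B^N}\tfrac1{|x|^2}\,dx$-style estimate, localized through the weight $\rho^2$, of the form $\int_{B^N}\rho^2|\nabla v|^2\geq (N-1)\int_{B^N}\tfrac{\rho^2}{|x|^2}$ — valid when $N\geq 7$ — which reduces the full energy to the functional $\int_{B^N}\big[\tfrac12|\nabla\rho|^2+\tfrac{N-1}{2|x|^2}\rho^2+\tfrac1{2\eps^2}W(1-\rho^2)\big]$ whose minimizer among radial (or general, by rearrangement in $|x|$) profiles with $\rho(1)=1$ is exactly $f_\eps$, by the uniqueness for the ODE \eqref{1} recalled in the excerpt. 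Equality in the vectorial lower bound forces $v(x)=Rx/|x|$ for a fixed $R\in O(N)$, and the boundary condition forces $R=\mathrm{Id}$; equality in the scalar minimization forces $\rho=f_\eps(|x|)$.

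The main obstacle is the sharp weighted lower bound $\int_{B^N}\rho^2|\nabla v|^2\geq (N-1)\int_{B^N}\rho^2/|x|^2$ for $\bS^{N-1}$-valued $v$ with $v=x/|x|$ on $\bS^{N-1}$, together with its equality cases — this is exactly where the dimensional restriction $N\geq 7$ enters and where the argument could fail for $2\leq N\leq 6$ (matching the dichotomy in the abstract). I expect this to be handled by the coupling of the Jäger–Kaul stability/minimality computation for $x/|x|$ with the presence of the modulus $\rho$: either via a second-variation/convexity argument exploiting that the constant $(N-1)$ in the Hardy-type inequality $\int_{B^N}|\nabla\xi|^2\geq (N-1)\int_{B^N}\xi^2/|x|^2$ (for $\xi$ vanishing suitably) is compatible with the comparison map, or via an explicit null-Lagrangian/divergence identity (a calibration) that directly certifies $u_\eps$ as the minimizer; the uniqueness then follows from tracking the equality cases through both inequalities. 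Presumably the paper develops precisely such a weighted Hardy/calibration inequality as its technical heart.
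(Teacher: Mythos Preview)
Your reduction to $z$-independence is fine, but the core argument on $B^N$ has a genuine gap. First, a point of confusion: the minimality of $x/|x|:B^N\to\bS^{N-1}$ among $\bS^{N-1}$-valued maps with identity boundary data holds for \emph{all} $N\ge3$ (Brezis--Coron--Lieb, Lin), not only $N\ge7$; the J\"ager--Kaul threshold $N\ge7$ concerns the equator map $(\tfrac{x}{|x|},0):B^N\to\bS^N$ into the sphere one dimension higher. So the restriction $N\ge7$ cannot emerge from the step you indicate. More seriously, the weighted inequality you rely on,
\[
\int_{B^N}\rho^2|\nabla v|^2\,dx\;\ge\;(N-1)\int_{B^N}\frac{\rho^2}{|x|^2}\,dx,
\]
is \emph{false} for general (non-radial) weights $\rho$, in every dimension. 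For $N\ge3$ one may take $v$ equal to $x/|x|$ outside a small ball $B(x_0,2\delta)$ with $x_0\ne0$, equal to the constant $x_0/|x_0|$ on $B(x_0,\delta)$, with a smooth $\bS^{N-1}$-valued interpolation on the annulus; choosing $\rho$ supported in $B(x_0,\delta)$ makes the left side vanish while the right side is positive. The minimality of $x/|x|$ is a global statement and does not localize through an arbitrary weight; to slice over spheres you would need $\rho$ radial, but radiality of $\rho=|u|$ is part of what you are trying to prove. There are further issues: the polar decomposition $u=\rho v$ is ill-defined where $u$ vanishes, and your final scalar step---that $f_\eps(|x|)$ uniquely minimizes $\int_{B^N}\big[\tfrac12|\nabla\rho|^2+\tfrac{N-1}{2|x|^2}\rho^2+\tfrac1{2\eps^2}W(1-\rho^2)\big]$ over \emph{non-radial} $\rho$ with $\rho=1$ on $\partial B^N$---is itself a nontrivial subcase of the theorem, not a consequence of the ODE uniqueness in \eqref{1}.

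The paper's route is additive rather than multiplicative and avoids all of this. One writes a competitor as $u_\eps+v$ with $v\in H^1_0$, uses the convexity of $W$ (which your scheme never invokes) to bound the excess energy from below by the quadratic form
\[
F_\eps(v)=\int_{B^N}\Big[|\nabla_x v|^2-\frac{1}{\eps^2}W'(1-f_\eps^2)\,|v|^2\Big]\,dx,
\]
and then proves $F_\eps\ge0$ by a double factorization $\psi=f_\eps\cdot|x|^{-(N-2)/2}\cdot g$ applied componentwise. The ODE \eqref{1} for $f_\eps$ converts $F_\eps$ into a Hardy-type expression, and the threshold $N\ge7$ appears exactly as the arithmetic condition $\tfrac{(N-2)^2}{4}\ge N-1$. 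Equality in $F_\eps$ forces $v=0$, which gives uniqueness. There is no polar decomposition, no weighted harmonic-map inequality, and no separate scalar minimization.
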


The proof is reminiscent of the works of Ignat-Nguyen-Slastikov-Zarnescu \cite{INSZ_ENS, INSZ_CRAS}  studying uniqueness and symmetry of minimizers of the Ginzburg-Landau functionals for $\RR^M$-valued maps defined on smooth $N$-dimensional domains, where $M$ is not necessarily equal to  $N$.  
The idea is to analyze $E_\eps(u)$ for an arbitrary map $u$ and to exploit the convexity of $W$ to lower estimate the excess energy w.r.t. $E_\eps(u_\eps)$ by a suitable quadratic energy functional depending on $u-u_\eps$. This quadratic functional comes from the linearized PDE at $u_\eps$ and can be handled by a factorization argument. The positivity of the excess energy then follows by a Hardy-type inequality holding true  only in high dimensions $N \geq 7$. This is similar to the result of J\"ager and Kaul \cite{JagKaul} on the minimality of the equator map for the harmonic map problem in dimension $N \geq 7$ that is proved using a certain inequality involving the sharp constant in the Hardy inequality.

We expect that our result remains valid in dimensions $2 \leq N \leq 6$:

\begin{conjecture}
\label{conj}
Assume that $W$ satisfies \eqref{Eq:26VI18-E1}, $n\geq 1$ and $2\leq N\leq 6$. Is it true that for every $\eps>0$, $u_\eps$ given in \eqref{def_sol_equa} is the unique global minimizer of $E_\eps$  in $\mcA_N$?
\end{conjecture}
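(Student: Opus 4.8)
The plan is to run the variational scheme behind Theorem \ref{GL_equator} and to repair the one step that degenerates for $N\le6$. As in that proof, the convexity of $W$ gives, for every $u\in\mcA_N$, the pointwise inequality $W(1-|u|^2)\ge W(1-|u_\eps|^2)+W'(1-|u_\eps|^2)\big(|u_\eps|^2-|u|^2\big)$; inserting this into $E_\eps(u)$, using the system \eqref{E-L} solved by $u_\eps$ and integrating by parts, reduces the excess energy to a quadratic form,
\be
E_\eps(u)-E_\eps(u_\eps)\ \ge\ Q_\eps(\phi):=\int_\Omega\Big[\tfrac12|\nabla\phi|^2-\tfrac1{2\eps^2}W'(1-f_\eps^2)\,|\phi|^2\Big]\,dX,\qquad \phi:=u-u_\eps.
\ee
It then suffices to prove $Q_\eps\ge0$ on admissible perturbations. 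Because $u_\eps$ is $z$-invariant and $Q_\eps$ has $z$-independent coefficients, the Neumann modes in $z$ decouple and only increase the form, reducing matters to $B^N$; there I would diagonalise $Q_\eps$ by a vector spherical-harmonic expansion $\phi(x)=\sum c(r)\,V(\omega)$ on $\Sphere^{N-1}$.

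The engine is that, by \eqref{1}, $f_\eps$ is a \emph{positive} zero mode of the radial operator $-\partial_{rr}-\tfrac{N-1}r\partial_r+\tfrac{N-1}{r^2}-\tfrac1{\eps^2}W'(1-f_\eps^2)$. The ground-state substitution $c=f_\eps\zeta$ then turns the radial form attached to a vector harmonic of angular eigenvalue $\lambda$ into
\be
\int_0^1 f_\eps^2\,\zeta'^2\,r^{N-1}\,dr\ +\ \big(\lambda-(N-1)\big)\int_0^1 f_\eps^2\,\zeta^2\,r^{N-3}\,dr.
\ee
For the modulus direction $V=x/|x|$ one has $\lambda=N-1$ and the form is just $\int_0^1 f_\eps^2\zeta'^2 r^{N-1}dr\ge0$; for the genuinely tangential harmonics, whose smallest eigenvalue (carried by $\nabla_{\Sphere^{N-1}}(a\cdot\omega)$) is $2$, the coefficient may be negative but is absorbed by the weighted Hardy inequality $\int_0^1 f_\eps^2\zeta'^2 r^{N-1}dr\ge C\int_0^1 f_\eps^2\zeta^2 r^{N-3}dr$, whose degenerate constant $(\tfrac{N-2}2)^2$ (the limit as $\eps\to0$, since $f_\eps\to1$ off the core) is at least the required $N-1-\lambda=N-3$ for every $N$ (with equality only at $N=4$). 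The whole difficulty is concentrated in the \emph{escape-type} directions $V\equiv b$, $b\in\R^N$ constant, i.e. $\phi=\eta(r)\,b$, which carry no angular gradient ($\lambda=0$): here $Q_\eps\ge0$ requires the same Hardy inequality with the far larger constant $N-1$, and $(\tfrac{N-2}2)^2\ge N-1$ holds \emph{only} for $N\ge7$. This is precisely the J\"ager--Kaul obstruction and shows that the lossy convexity bound alone cannot settle $2\le N\le6$.

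To reach $N\le6$ one must restore the nonnegative term the convexity step discards, namely $\tfrac2{\eps^2}W''(1-f_\eps^2)(u_\eps\cdot\phi)^2$ from the true second variation. The point is that the offending escape-type modes are \emph{not} orthogonal to $u_\eps$: since $u_\eps\cdot\phi=f_\eps\,\eta\,(b\cdot\omega)\not\equiv0$, this discarded term is strictly positive exactly where the Hardy deficit lives. I would therefore replace the first-order convexity estimate by a second-order Taylor expansion of $W$ with quadratic remainder, producing the sharpened bound $E_\eps(u)-E_\eps(u_\eps)\ge Q_\eps(\phi)+\tfrac1{\eps^2}\int_\Omega W''(1-f_\eps^2)(u_\eps\cdot\phi)^2\,dX$ modulo a controlled higher-order error, and prove that the added coercive term dominates the deficit on the escape sector. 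A cleaner alternative is the polar factorisation $u=\rho\,\sigma$ with $\rho=|u|$ and $\sigma\in\Sphere^{N-1}$, which splits $E_\eps$ into a modulus energy in $\rho$ --- governed by the convexity of $W$ --- and a $\rho^2$-weighted harmonic-map energy in $\sigma$; one would then show that the vanishing weight $\rho^2\sim r^2$ raises the effective Hardy dimension enough to make the equator map $\sigma=x/|x|$ energy-minimising for every $N\ge2$.

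The step I expect to be the real obstacle is the passage from this \emph{infinitesimal} balance to \emph{global} minimality and uniqueness. Mode by mode the two effects offset --- $W''$-coercivity in the modulus direction against the Hardy deficit in the phase direction --- but a global bound must control the coupling between modulus and phase for arbitrary, possibly non-symmetric competitors far from $u_\eps$, and must do so \emph{uniformly in} $\eps\in(0,\infty)$: precisely the range along which the weight $f_\eps^2$ degenerates near the vortex core as $\eps\to0$ while $u_\eps$ tends to the singular equator map. Quantifying the weighted Hardy constant sharply along this degeneration, and then upgrading each inequality to a strict one so that equality forces $\phi\equiv0$ (hence uniqueness, the $SO(N)$-symmetry being already respected by the radial $u_\eps$), is where the genuine work lies. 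The dimension $N=4$, at which the Hardy deficit $N-1-(\tfrac{N-2}2)^2$ to be absorbed by the $W''$ term is largest (equal to $2$), should be the most stringent case, and this is presumably why the statement remains only a conjecture.
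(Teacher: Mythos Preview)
This statement is listed in the paper as an \emph{Open Problem} (Conjecture~\ref{conj}); the paper gives no proof of it. So there is nothing to compare your attempt against --- the paper only proves the cases $N\ge7$ (Theorem~\ref{GL_equator}) and $2\le N\le6$, $\eps\ge\eps_N$ (Theorem~\ref{eps_large}), leaving the regime $2\le N\le6$, $\eps<\eps_N$ genuinely open.

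Your diagnosis of \emph{why} it is open is essentially correct and matches the paper's own discussion. The convexity bound \eqref{111} reduces the excess energy to the quadratic form governed by $L_\eps$ in \eqref{GL-op}, whose first eigenvalue $\ell(\eps)$ is negative precisely for $\eps<\eps_N$ (see \eqref{epsN}); the modes that make it negative are the componentwise radial ($\ell=0$) modes $\phi=\eta(r)b$. Two remarks on your write-up: (i) calling these ``escape-type'' is misleading here, since in $\mcA_N$ one has $M=N$ and there is no extra coordinate to escape into --- in the paper's terminology the escape phenomenon only occurs for $M\ge N+1$; these are rather translation-type modes inside $\R^N$; (ii) your vector-harmonic bookkeeping (``tangential eigenvalue $2$'') is looser than the componentwise scalar decomposition the paper actually uses, where the only problematic sector is $\ell=0$ with eigenvalue $0$ and deficit $N-1$.

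Your proposed repairs do not close the gap, and you say so yourself. Restoring the $W''(1-f_\eps^2)(u_\eps\cdot\phi)^2$ term recovers exactly the full second variation, whose nonnegativity is the \emph{local} minimality result of Ignat--Nguyen \cite{IN} cited in the paper; it does not yield a global bound because a second-order Taylor expansion of $W$ produces $W''$ at an intermediate point together with cubic and quartic remainders in $\phi$ that carry no sign for large perturbations. The polar factorisation $u=\rho\sigma$ faces the same issue: the $\rho^2$-weighted harmonic-map energy for $\sigma$ is coupled to the modulus energy for $\rho$, and one must control this coupling globally and uniformly as $\eps\to0$, where $f_\eps\to1$ and the weight collapses to the unweighted Hardy problem with the insufficient constant $\big(\tfrac{N-2}{2}\big)^2<N-1$. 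In short, your outline is a fair account of the obstruction and of what one might try, but it is not a proof, and the paper does not claim one either.
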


It is well known that the uniqueness of $u_\eps$ holds true for large enough $\eps>0$ in any dimension $N \geq 2$. Indeed, denoting by $\lambda_1$  the first eigenvalue of $-\Delta_x$ in $B^N$ with  zero Dirichlet boundary condition, then for any $\eps > \sqrt{W'(1)/\lambda_1}$, $E_\eps$ is strictly convex in $\mcA_N$ (see e.g., \cite[Theorem VIII.7]{BBH}, \cite[Remark 3.3]{INSZ_ENS}) and thus has a unique critical point in $\mcA_N$ that is the global minimizer of our problem. We improve this result as follows: for the radial profile $f_\eps$ in \eqref{1}, we denote by $\ell(\eps)$ the first eigenvalue of the operator
\begin{equation}
L_\eps = -\Delta_x - \frac{1}{\eps^2}W'(1 - f_\eps^2)
	\label{GL-op}
\end{equation}
acting on maps defined in $B^N$ with zero Dirichlet boundary condition. 
It is proved in \cite[Lemma 2.3]{IN} that if $2 \leq N \leq 6$ and $W \in C^2((-\infty,1])$ satisfies \eqref{Eq:26VI18-E1}, then the first eigenvalue $\ell(\eps)$ is a continuous function in $\eps$  
and there exists $\eps_N \in (0,\infty)$ such that 
\be
\label{epsN}
\textrm{$\ell(\eps) < 0$ in $(0,\eps_N)$, $\quad \ell(\eps_N) = 0\quad $ and $\quad \ell(\eps) > 0$ in $(\eps_N,\infty)$}.
\ee
Note that\footnote{Indeed, if $v\in H^1_0(B^N)$ is a first eigenfunction of $L_{\eps_N}$ in $B^N$ such that $\|v\|_{L^2(B^N)}=1$ then 
$$\lambda_1\leq \int_{B^N} |\nabla_x v|^2\, dx=\frac1{\eps_N^2}\int_{B^N} W'(1-f_{\eps_N}^2) v^2\, dx<\frac{W'(1)}{\eps_N^2}$$
because $\ell(\eps_N)=0$, $0<f_{\eps_N}<1$ in $(0,1)$ and \eqref{Eq:26VI18-E1} implies $W'(0)=0$ and $W'(t)>0$ for $t\in (0,1]$.} $0=\ell(\eps_N)> \lambda_1-\frac1{\eps_N^2} W'(1)$ yielding $$\eps_N<\sqrt{W'(1)/\lambda_1}.$$ 

\begin{theorem}
\label{eps_large}
Assume that $W$ satisfies \eqref{Eq:26VI18-E1}, $n\geq 1$ and $2\leq N\leq 6$. If $\eps\geq \eps_N$, then $u_\eps$ given in \eqref{def_sol_equa} is a global minimizer of $E_\eps$  in $\mcA_N$. Moreover, if either $\eps>\eps_N$, or $(\eps=\eps_N$ and $W$ is in addition strictly convex), then $u_\eps$ is the unique global minimizer of $E_\eps$  in $\mcA_N$. 
\end{theorem}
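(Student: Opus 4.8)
The plan is to compare the energy of an arbitrary competitor $u \in \mcA_N$ with $E_\eps(u_\eps)$ and show the difference is nonnegative, using the sign of the first eigenvalue $\ell(\eps)$ of the operator $L_\eps$ in \eqref{GL-op}. First I would write $u = u_\eps + w$ where $w \in H^1(\Omega;\R^N)$ vanishes on $\partial B^N \times (0,1)^n$, and expand the energy. Using that $u_\eps$ solves \eqref{E-L}, integration by parts gives $E_\eps(u) - E_\eps(u_\eps) = \frac12\int_\Omega |\nabla w|^2 + \frac1{2\eps^2}\int_\Omega \big[ W(1-|u|^2) - W(1-|u_\eps|^2) + 2 W'(1-|u_\eps|^2)\, u_\eps\cdot w\big]\, dX$. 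Now the convexity of $W$ (assumption \eqref{Eq:26VI18-E1}) yields the pointwise inequality $W(1-|u|^2) - W(1-|u_\eps|^2) \geq W'(1-|u_\eps|^2)\big((1-|u|^2)-(1-|u_\eps|^2)\big) = -W'(1-|u_\eps|^2)\big(|w|^2 + 2 u_\eps\cdot w\big)$. Substituting this bound, the linear-in-$w$ terms cancel and we are left with
\[
E_\eps(u) - E_\eps(u_\eps) \;\geq\; \frac12\int_\Omega |\nabla w|^2\, dX \;-\; \frac1{2\eps^2}\int_\Omega W'(1-|u_\eps|^2)\, |w|^2\, dX .
\]
Since $u_\eps$ is invariant in $z$, we have $W'(1-|u_\eps|^2) = W'(1-f_\eps^2(|x|))$, so the right-hand side is exactly $\frac12 \int_\Omega \big(|\nabla w|^2 - \frac1{\eps^2} W'(1-f_\eps^2)|w|^2\big)\, dX$, which is the quadratic form associated with $L_\eps$ acting on the $N$ scalar components of $w$ (with the extra $z$-derivatives only helping).

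Next I would exploit the eigenvalue information. For each fixed $z$, and for each scalar component $w_i(\cdot,z) \in H^1_0(B^N)$, the Rayleigh quotient characterization of $\ell(\eps)$ gives $\int_{B^N}\big(|\nabla_x w_i|^2 - \frac1{\eps^2}W'(1-f_\eps^2) w_i^2\big)\, dx \geq \ell(\eps) \int_{B^N} w_i^2\, dx$. Summing over $i = 1,\dots,N$, integrating in $z$, and adding the nonnegative term $\int_\Omega |\nabla_z w|^2\, dX$ back in, we obtain $E_\eps(u) - E_\eps(u_\eps) \geq \frac{\ell(\eps)}2 \int_\Omega |w|^2\, dX + \frac12\int_\Omega |\nabla_z w|^2\, dX$. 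When $\eps > \eps_N$ we have $\ell(\eps) > 0$ by \eqref{epsN}, so the right-hand side is $\geq 0$, and it vanishes only if $w \equiv 0$; hence $u_\eps$ is the unique global minimizer. When $\eps = \eps_N$ we have $\ell(\eps_N) = 0$, so we still get $E_\eps(u) \geq E_\eps(u_\eps)$, proving minimality; for uniqueness in this borderline case, equality forces $\nabla_z w = 0$ (so $w$ is $z$-independent) and each component $w_i(\cdot,z)$ to be a first eigenfunction of $L_{\eps_N}$, i.e. proportional to the principal eigenfunction $v > 0$ on $B^N$; but then $w = v(|x|)\, c$ for a constant vector $c \in \R^N$, and one must rule this out. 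Here the additional strict convexity of $W$ is used: the pointwise convexity inequality above is then strict wherever $|u|^2 \neq |u_\eps|^2$, i.e. wherever $|w|^2 + 2 u_\eps\cdot w \neq 0$, which holds on a set of positive measure unless $w \equiv 0$; this gives a strict inequality in the energy comparison and forces $w \equiv 0$.

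The main obstacle is the uniqueness argument at the threshold $\eps = \eps_N$: one must carefully track the equality cases through both the convexity inequality for $W$ and the Rayleigh quotient inequality, and argue that the surviving candidate $w = v\, c$ actually produces a strict energy increase (using strict convexity of $W$, which makes $t \mapsto W(1-t)$ strictly convex and hence the pointwise inequality strict off the set $\{|u| = |u_\eps|\}$). A secondary technical point is justifying the integrations by parts and the use of the Euler--Lagrange equation \eqref{E-L} for merely $H^1$ competitors, which is handled by the regularity $u_\eps \in C^1(\overline\Omega)$ recalled after \eqref{E-L} together with a standard density/truncation argument (recalling $|u_\eps|\le 1$ so that $W$ is evaluated only on $[0,1]$ where it is $C^2$). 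Everything else is routine once the quadratic form is identified with the one defining $\ell(\eps)$.
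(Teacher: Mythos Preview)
Your proposal is correct and follows essentially the same route as the paper: expand the excess energy, use convexity of $W$ to bound it below by the quadratic form of $L_\eps$, and then invoke the sign of $\ell(\eps)$ via the Rayleigh characterization. For the borderline case $\eps=\eps_N$, the paper carries out explicitly the step you flag as the main obstacle: from equality in the Rayleigh inequality one gets $w=\psi\,c$ with $\psi>0$ the radial first eigenfunction, and from equality in the (strictly) convex bound one gets $|w|^2+2u_\eps\cdot w=0$ a.e.; dividing by $\psi$ and sending $|x|\to 1$ (where $\psi\to 0$ and $f_\eps\to 1$) forces $x\cdot c=0$ for all $x\in\bS^{N-1}$, hence $c=0$.
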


The case $\eps<\eps_N$ is still not solved as stated in  Open Problem \ref{conj}. 
Let us summarize some known results:

\medskip

\noindent {\it I. The case of $n=0$ and $\Omega=B^N$ (we also discuss here the problem for 
$\Omega= \R^N$)}. In this case, the above question was raised in dimension $N = 2$ for the disk $\Omega=B^2$ in the seminal book of Bethuel, Brezis and H\'elein \cite[Problem 10, page 139]{BBH}, and in general dimensions $N \geq 2$ and also for the  blow-up limiting problem around the vortex point (when the domain $\Omega$ is the whole space $\R^N$ and by rescaling, $\eps$ can be assumed equal to $1$) in an article of Brezis \cite[Section 2]{Brezis}. 
For \emph{sufficiently small} $\eps> 0$ and for the disk domain $\Omega=B^2$, Pacard and Rivi\`ere \cite[Theorem 10.2]{Pacard_Riviere} showed  that $E_\eps$ has a unique critical point in $\mcA_2$ and so, it is given by the radially symmetric solution $u_\eps$ in \eqref{def_sol_equa} (for $n=0$). For $N\geq 7$, $\Omega=B^N$ and {\it any} $\eps>0$,  it is proved in  \cite{INSZ_CRAS} that $E_\eps$ has a unique minimizer in $\mcA_N$ which is given by the radially symmetric solution $u_\eps$ in \eqref{def_sol_equa} (for $n=0$). For $2\leq N\leq 6$ and $\Omega=B^N$, Ignat-Nguyen \cite{IN} proved that for any $\eps>0$, $u_\eps$ is a local minimizer of $E_\eps$ in $\mcA$ (which is an extension of the result of Mironescu \cite{Min-stab} in dimension $N=2$). Also, 
Mironescu \cite{Mironescu_symmetry} showed in dimension $N = 2$ that, when $B^2$ is replaced by $\RR^2$ and $\eps = 1$, a local minimizer of $E_\eps$ satisfying a degree-one boundary condition at infinity is unique (up to translation and suitable rotation). This was extended in dimension $N = 3$ by Millot and Pisante \cite{mil-pis} and in dimensions $N \geq 4$ by Pisante \cite{Pisante} in the case of the blow-up limiting problem on $\RR^N$ and $\eps = 1$. All these results (holding for $n=0$) are related to the study of the limit problem obtained by sending $\eps \rightarrow 0$ when the Ginzburg-Landau problem on the unit ball  `converges' to the harmonic map problem from $B^N$ into the unit sphere $\bS^{N-1}$. For that harmonic map problem, the vortex boundary condition yields uniqueness of the minimizing harmonic $\bS^{N-1}$-valued map $x \mapsto \frac{x}{|x|}$ if $N\geq 3$; this is proved by Brezis, Coron and Lieb~\cite{BrezisCoronLieb}  in dimension $N = 3$ and by Lin \cite{Lin} in any dimension $N \geq 3$; we also mention J\"{a}ger and Kaul \cite{JagKaul} in dimension $N \geq 7$ for the equator map $x\in B^N\mapsto (\frac{x}{|x|},0)\in \bS^{N}$.

\noindent {\it II. The case of $n\geq 1$ and $\Omega=B^N\times (0,1)^n$}. As we explain in Remark \ref{import-rem} below, for some $\eps>0$, if the minimality of the radially symmetric solution $u_\eps$ in \eqref{def_sol_equa} holds in the case $n=0$ (so, for $\Omega=B^N$), then this implies the minimality of $u_\eps$ in $\Omega=B^N\times (0,1)^n$ also for every dimension $n\geq 1$. In particular, the result of Pacard-Rivi\`ere \cite[Theorem 10.2]{Pacard_Riviere} for $n=0$ and $N=2$ yields the minimality of $u_\eps$ in \eqref{def_sol_equa} defined in $B^2\times (0,1)^n$ for every $n\geq 1$ if $\eps>0$ is sufficiently small. Also, the result of Ignat-Nguyen-Slastikov-Zarnescu \cite[Theorem 1]{INSZ_CRAS} for $n=0$, $N\geq 7$ and any $\eps>0$ generalizes to dimension $n\geq 1$ for $\Omega=B^N\times (0,1)^n$ (see the proof of Theorem  \ref{GL_equator}). We also mention the work of Sandier-Shafrir \cite{SS2} where they treat the case of topologically trivial $\RR^2$-valued solutions in the domain $\Omega=\RR^3$ (see also \cite{DPMR, S1} for vortex filament solutions). 
 
\subsection{Escaping $\R^M$-valued vortex sheet solutions when $M\geq N+1$}
In dimension $2\leq N\leq 6$ and for $\eps<\eps_N$ given in \eqref{epsN}, a different type of radially symmetric vortex sheet solution appears provided that the target space has dimension $M\geq N+1$. More precisely, we consider the energy functional
$
E_\eps
$ 
in \eqref{en} over the set of $\R^M$-valued maps
\be
\label{mca}
\mcA:=\{ u\in H^1(\Omega; \R^M):\,  u(x,z)=(x, 0_{\R^{M-N}}) \textrm{ on } \partial B^N = \bS^{N-1} \subset \RR^M, z\in (0,1)^n\}.
\ee
If $M\geq N+1$, the prototype of radially symmetric critical points of $E_{\eps}$ in $\mcA$ has the following form (invariant in $z$-direction): \footnote{If $M=N+1$, then $\tilde u_\eps(x,z)=(\tf_{\eps}(r) \frac{x}{|x|}, g_{\eps}(r))$ for every $x\in B^N$ and  $z\in (0,1)^n$. In fact, if $n=0$ (so, for $\Omega=B^N$), every bounded critical point of $E_\eps$ in $\mcA$ that is invariant under the action of a special group (isomorphic to $SO(N)$) has the form of  $\tilde u_\eps$, see \cite[Definition A.1, Lemma A.5]{IN}. }  
\begin{equation}
\tilde u_\eps(x,z)=(\tf_{\eps}(r) \frac{x}{|x|}, 0_{\R^{M-N-1}}, g_{\eps}(r)) \in \mcA, \quad x\in B^N, z\in (0,1)^n, r=|x|,
	\label{Eq:feegeeH1}
\end{equation}
where $(\tf_{\eps},g_{\eps})$ satisfies the system of  ODEs
\begin{align}
-\tf_{\eps}'' - \frac{N-1}{r} \tf_{\eps}' + \frac{N-1}{r^2} \tf_{\eps}
	&= \frac{1}{\eps^2} W'(1 -  \tf_{\eps}^2 - g_{\eps}^2) \tf_{\eps} \quad \textrm{in } (0,1),
	\label{Eq:20III21-fee}\\
-g_{\eps}'' - \frac{N-1}{r} g_{\eps}' 
	&= \frac{1}{\eps^2} W'(1 -  \tf_{\eps}^2 - g_{\eps}^2) g_{\eps}  \quad \textrm{in } (0,1),\label{Eq:20III21-gee}\\
 \tf_{\eps}(1) &= 1  \text{ and } g_{\eps}(1) = 0.
	\label{Eq:20III21-feegeeBC}
\end{align}
We distinguish two type of radial profiles:

\smallskip

$\bullet$ the {\it non-escaping} radial profile $(\tf_\eps=f_\eps, g_\eps=0)$ with the unique radial profile $f_\eps$ given in \eqref{1}; in this case, we say that $\tilde u_\eps=(u_\eps, 0_{\R^{M-N}})$ is a {\it non-escaping} (radially symmetric) vortex sheet solution where $u_\eps$ is given in \eqref{def_sol_equa}.

\smallskip

$\bullet$ the {\it escaping} radial profile $(\tf_\eps, g_\eps)$ with $g_\eps>0$ in $(0,1)$; in this case, we call an {\it escaping} (radially symmetric) vortex sheet solution $\tilde u_\eps$ in \eqref{Eq:feegeeH1}. In this case, $\tf_\eps\neq f_\eps$ and obviously, $(\tf_\eps, -g_\eps)$ is another radial profile to \eqref{Eq:feegeeH1}-\eqref{Eq:20III21-feegeeBC}.

\smallskip

The properties of such radial profiles (e.g., existence, uniqueness, minimality, monotonicity) are analyzed in Theorem \ref{Thm:ExtendedExist} below and are based on ideas developed by Ignat-Nguyen \cite{IN}. 

Our main result proves the radial symmetry of global minimizers of $E_\eps$ in $\mcA$. More precisely, the
following dichotomy occurs at $\eps_N$ defined in \eqref{epsN}: if $\eps<\eps_N$, then {\it escaping} radially symmetric vortex sheet solutions exist and determine (up to certain orthogonal transformations) the full set of global minimizers of $E_\eps$ in $\mcA$; if instead $\eps\geq \eps_N$, then the {\it non-escaping} radially symmetric vortex sheet solution is the unique global minimizer of $E_{\eps}$ in $\mcA$ and no escaping radially symmetric vortex sheet solutions exist in this case.

\begin{theorem}\label{thm:dicho}
Let $n\geq 1$, $2 \leq N \leq 6$, $M\geq N+1$, $W \in C^2((-\infty,1])$ satisfy \eqref{Eq:26VI18-E1} and be strictly convex.
Consider $\eps_N \in (0,\infty)$ such that $\ell(\eps_N)=0$ in \eqref{epsN}.
Then there exists an escaping radially symmetric vortex sheet solution $\tilde u_\eps$ in \eqref{Eq:feegeeH1} with $g_\eps>0$ in $(0,1)$ if and only if $0 < \eps < \eps_N$. Moreover,   

\begin{enumerate}

\item if $0 < \eps < \eps_N$, the escaping radially symmetric vortex sheet solution $\tilde u_\eps$ is a global minimizer of $E_\eps$ in $\mcA$ and all global minimizers of  $E_\eps$ in $\mcA$ are radially symmetric given by $R\tilde u_\eps$ where $R\in O(M)$ is an orthogonal transformation of $\R^M$ satisfying $Rp=p$ for all $p\in \R^N\times \{0_{\R^{M-N}}\}$. In this case, the non-escaping vortex sheet solution $(u_\eps, 0_{\R^{M-N}})$ in \eqref{def_sol_equa} is an unstable critical point of $E_{\eps}$ in $\mcA$. 

\item if $\eps\geq \eps_N$, the non-escaping vortex sheet solution $(u_\eps, 0_{\R^{M-N}})$ in \eqref{def_sol_equa} is the unique global minimizer of $E_\eps$ in $\mcA$. Furthermore, there are no bounded critical points $w_\eps$ of $E_{\eps}$ in $\mcA$ that escape in some direction $e\in \bS^{M-1}$ (i.e., $w_\eps\cdot e>0$ a.e. in $\Omega$).

\end{enumerate}
\end{theorem}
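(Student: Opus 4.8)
The plan is to reduce the minimization of $E_\eps$ over $\mcA$ to the one–dimensional problem \eqref{Eq:20III21-fee}--\eqref{Eq:20III21-feegeeBC} for radial profiles, to read off the dichotomy from Theorem~\ref{Thm:ExtendedExist}, and to settle the (in)stability and non–existence statements by spectral arguments built on the eigenvalue $\ell(\eps)$ of $L_\eps$ in \eqref{GL-op}. Since the integrand of $E_\eps$ is independent of $z$ and the datum on $\partial B^N\times(0,1)^n$ is $z$–invariant, I would first show that every global minimizer $u$ of $E_\eps$ in $\mcA$ is independent of $z$: writing $|\nabla u|^2=|\nabla_x u|^2+|\partial_z u|^2$ and noting that for a.e.\ $z$ the slice $u(\cdot,z)$ competes for the $n=0$ problem on $B^N$ with $\RR^M$–valued maps and vortex datum, one gets $E_\eps(u)\ge m_0+\tfrac12\int_\Omega|\partial_z u|^2$, where $m_0$ is the infimum of that $n=0$ energy; since $m_0$ is attained by a $z$–independent extension, equality forces $\partial_z u\equiv 0$ and each slice to be an $n=0$ minimizer. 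On $B^N$ one then symmetrizes $u=(u',u'')$, $u'\in\RR^N$, $u''\in\RR^{M-N}$, into the form \eqref{Eq:feegeeH1}: replacing $u''$ by $(|u''|,0_{\RR^{M-N-1}})$ leaves the potential unchanged and does not increase the Dirichlet energy (since $|\nabla|u''||\le|\nabla u''|$), so one may take $u''=g\ge 0$ scalar, and then, by the rearrangement technique of Ignat–Nguyen–Slastikov–Zarnescu~\cite{INSZ_CRAS}, the $\RR^N$–part $u'$ with fixed datum $x$ is put into $SO(N)$–equivariant vortex form $\tf(|x|)\tfrac{x}{|x|}$ while $g$ is radialized, without increasing $E_\eps$. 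Tracking the equality cases in these (co–area / Pólya–Szegő type) inequalities, together with strict convexity of $W$, shows the reduction is strict unless $u=R\tilde u_\eps$ for some $R\in O(M)$ with $Rp=p$ on $\RR^N\times\{0_{\RR^{M-N}}\}$ (using that $\tilde u_\eps$ in \eqref{Eq:feegeeH1} is already $SO(N)$–equivariant).

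Theorem~\ref{Thm:ExtendedExist} provides exactly what is needed for the reduced $1$D problem: an escaping profile (with $g>0$ on $(0,1)$) exists iff $0<\eps<\eps_N$, and in that range it is unique and has strictly smaller reduced energy than $(f_\eps,0)$, while for $\eps\ge\eps_N$ the unique $1$D minimizer is $(f_\eps,0)$. Combined with the reduction above this yields the claimed dichotomy: for $0<\eps<\eps_N$ every global minimizer equals $R\tilde u_\eps$ with $g_\eps>0$, and for $\eps\ge\eps_N$ the unique global minimizer is $(u_\eps,0_{\RR^{M-N}})$ from \eqref{def_sol_equa} (uniqueness of $f_\eps$ in \eqref{1} plus the strict symmetrization, using strict convexity of $W$). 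That $(u_\eps,0)$ is unstable when $\eps<\eps_N$ is then immediate: taking $v\in H^1_0(B^N)$, $v\not\equiv0$, a first eigenfunction of $L_\eps$ extended constantly in $z$, the second variation of $E_\eps$ at $(u_\eps,0)$ in the direction $(0_{\RR^{M-1}},v)$ equals $\int_\Omega\big(|\nabla v|^2-\tfrac1{\eps^2}W'(1-f_\eps^2)\,v^2\big)=\ell(\eps)\,\|v\|_{L^2(B^N)}^2<0$.

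For the last assertion, let $w_\eps$ be a bounded critical point, so $|w_\eps|\le 1$ by the maximum principle, with $w_\eps\cdot e>0$ a.e.\ for some $e\in\bS^{M-1}$. Writing $e=(e',e'')\in\RR^N\times\RR^{M-N}$ and restricting to $\partial B^N\times(0,1)^n$ where $w_\eps\cdot e=x\cdot e'$, continuity of $w_\eps$ and $w_\eps\cdot e\ge 0$ force $e'=0$; by the residual $O(M-N)$ symmetry we may assume $e=e_M$ and set $\psi:=w_\eps^{(M)}\ge 0$, $\psi\not\equiv 0$, which satisfies $-\Delta\psi=\tfrac1{\eps^2}W'(1-|w_\eps|^2)\psi$ in $\Omega$ with zero Dirichlet datum on $\partial B^N\times(0,1)^n$ and Neumann datum in $z$, hence $\psi>0$ in $\Omega$ by the strong maximum principle. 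Pairing $\psi$ with a first eigenfunction $\xi>0$ of $L_\eps$ on $B^N$, extended constantly in $z$, in Green's identity — all boundary terms vanish by the mixed conditions — gives
\[
0=\ell(\eps)\int_\Omega \psi\,\xi\,dX+\frac1{\eps^2}\int_\Omega\big[W'(1-f_\eps^2)-W'(1-|w_\eps|^2)\big]\psi\,\xi\,dX .
\]
The crux is the comparison $|w_\eps(x,z)|\ge f_\eps(|x|)$, which makes the second integrand nonnegative; granting it, and since $\ell(\eps)\ge 0$, $\psi\,\xi>0$, both terms vanish, so $\ell(\eps)=0$, i.e.\ $\eps=\eps_N$ by \eqref{epsN} (for $\eps>\eps_N$ the first term alone is already strictly positive, a contradiction at once), and $W'(1-|w_\eps|^2)\equiv W'(1-f_{\eps_N}^2)$, i.e.\ $|w_\eps|\equiv f_{\eps_N}$ by strict convexity of $W$. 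Then $\psi$ solves $-\Delta\psi=\tfrac1{\eps_N^2}W'(1-f_{\eps_N}^2)\psi$, so its $z$–average $\bar\psi$ solves $L_{\eps_N}\bar\psi=0$ on $B^N$ with $\bar\psi>0$, whence $\bar\psi=c\,\xi$ for some $c>0$; but $\psi\le|w_\eps|=f_{\eps_N}$ gives $c\,\xi(0)=\bar\psi(0)\le f_{\eps_N}(0)=0$, contradicting $\xi(0)>0$.

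I expect the genuine difficulties to lie in two places. First, the equality analysis of the rearrangement inequalities, which is what promotes "there is a minimizer of the escaping form" to "every minimizer has this form up to $O(M-N)$"; this is technical but follows the template of~\cite{INSZ_ENS,INSZ_CRAS}. Second, and more seriously, the comparison $|w_\eps|\ge f_\eps$ in the last step: a naive maximum principle for $|w_\eps|$ against $f_\eps$ is obstructed by the vortex term $\tfrac{N-1}{|x|^2}f_\eps$ present in \eqref{1}, so this comparison must exploit the topological constraint imposed by the degree–one vortex boundary condition rather than a plain comparison principle, and I would expect to adapt the corresponding argument from Ignat–Nguyen~\cite{IN}.
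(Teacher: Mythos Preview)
Your argument has two genuine gaps, and both are bypassed in the paper by a single tool you do not invoke: Theorem~\ref{thm:main}.

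\medskip
\textbf{Classification of minimizers for $0<\eps<\eps_N$.} The reduction of a general minimizer $u=(u',u'')$ on $B^N$ to the radially symmetric form \eqref{Eq:feegeeH1} is not a P\'olya--Szeg\H{o} rearrangement. Replacing $u''$ by $|u''|$ is fine, but there is no standard symmetrization that sends an $\RR^N$-valued map $u'$ with datum $x$ on $\partial B^N$ to $\tf(|x|)\tfrac{x}{|x|}$ while not increasing the Dirichlet energy; the reference to \cite{INSZ_CRAS} does not support this (that paper argues via a convexity/Hardy inequality, not rearrangement). The paper avoids this completely: since $\tilde u_\eps$ is a bounded critical point with $\tilde u_\eps\cdot e_M=g_\eps>0$, Theorem~\ref{thm:main} directly yields that $\tilde u_\eps$ is a global minimizer and that \emph{every} global minimizer equals $R\tilde u_\eps$ for some $R\in O(M)$ fixing $\RR^N\times\{0\}$. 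No symmetrization and no equality-case analysis is needed.

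\medskip
\textbf{Non-existence of escaping critical points for $\eps\ge\eps_N$.} Your Green-identity argument hinges on the comparison $|w_\eps(x,z)|\ge f_\eps(|x|)$, which you yourself flag as unproven. There is no reason this should hold for an \emph{arbitrary} bounded critical point $w_\eps$: the equation for $|w_\eps|$ and the ODE \eqref{1} for $f_\eps$ differ by the term $\tfrac{N-1}{r^2}f_\eps$, and no topological input of the type you allude to produces a pointwise modulus bound for general (non-minimizing, non-symmetric) critical points. The paper's argument is a one-line contradiction: if such a bounded escaping $w_\eps$ existed, Theorem~\ref{thm:main} would make it a global minimizer of $E_\eps$ in $\mcA$, contradicting the uniqueness of the non-escaping minimizer $(u_\eps,0_{\RR^{M-N}})$ established in (the proof of) Theorem~\ref{eps_large}.

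\medskip
Your treatment of $z$-independence (which matches Remark~\ref{import-rem}) and of the instability of $(u_\eps,0)$ via the first eigenfunction of $L_\eps$ are correct and agree with the paper. The dichotomy at the level of radial profiles is indeed read off from Theorem~\ref{Thm:ExtendedExist}. But the two points above are not technicalities: they are exactly where the proof happens, and the missing ingredient in both cases is Theorem~\ref{thm:main}.
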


The result above holds also if $n=0$, i.e., $\Omega=B^N$ and the vortex sheets corresponding to  the above solutions become vortex points (see Theorem \ref{thm:ball}). It generalizes  \cite[Theorem 1.1]{INSZ_ENS} that was proved in the case $N=2$ and $M=3$ (without identifying the meaning of the dichotomy parameter $\eps_N$ in \eqref{epsN}). The dichotomy in 
Theorem~\ref{thm:dicho} happens in dimensions $2\leq N\leq 6$ because of the phenomenology occurring for the limit problem $\eps\to 0$. More precisely, if $M\geq N+1$, then minimizing $\mathbb{S}^{M-1}$-valued harmonic maps in $\mcA$ are smooth and escaping in a direction of $\bS^{M-1}$ provided that $N\leq 6$; if $N\geq 7$, then there is a unique minimizing $\mathbb{S}^{M-1}$-valued harmonic maps in $\mcA$, non-escaping and singular, the singular set  being given by a vortex sheet of dimension $n$ in $\Omega$ (see Theorem \ref{thm:dico-HMP} in Appendix below). 
This suggests why in dimension $N\geq 7$ and for any $\eps>0$, there is no escaping radially symmetric vortex sheet critical point $\tilde u_\eps$ of  $E_\eps$ in $\mcA$ while the non-escaping vortex sheet solution $(u_\eps, 0_{\R^{M-N}})$ is the unique global minimizer of $E_\eps$ in $\mcA$ (see Theorem \ref{GL_equatorNM} and Remark \ref{rem:dichoto} below).

The paper is meant to be self-contained and it is organized as follows. In Section \ref{sec-2}, we prove the minimality and the uniqueness results for the non-escaping radially symmetric solution in Theorems  \ref{GL_equator} and \ref{eps_large}; this is done in a more general setting by considering the target dimension $M\geq N$ for the set of configurations $\mcA$ instead of $\mcA_N$. Section \ref{sec-3} is devoted to characterize escaping vortex sheet solutions. First, we prove the minimality of such bounded solutions stated in Theorem \ref{thm:main}. Second, we prove existence, minimality and uniqueness results for the escaping radial profile in Theorem \ref{Thm:ExtendedExist}. Finally, we prove our main result on the dichotomy between escaping / non-escaping radially symmetric vortex sheet solutions in Theorem \ref{thm:dicho}. In Appendix, we prove the corresponding dichotomy result for $\bS^{M-1}$-valued harmonic maps in Theorem \ref{thm:dico-HMP} which again is based on the minimality of escaping $\bS^{M-1}$-valued harmonic maps in Theorem \ref{prop:HMP}.

\bigskip

\noindent{\bf Acknowledgment.} R.I. is partially supported by the ANR projects ANR-21-CE40-0004 and ANR-22-CE40-0006-01. He also thanks for the hospitality of the Hausdorff Research Institute for Mathematics in Bonn during the trimester ``Mathematics for Complex Materials".

\section{The non-escaping vortex sheet solution. Proof of Theorems \ref{GL_equator} and \ref{eps_large}}
\label{sec-2}

Theorem \ref{GL_equator} will be obtained as a consequence of a stronger result on the uniqueness of global minimizers of  the $\RR^M$-valued Ginzburg-Landau functional with $M \geq N\geq 7$. For that, we consider the energy functional
$
E_\eps
$ 
in \eqref{en} over the set $\mcA$ defined in \eqref{mca}. 
The aim is to prove the minimality and uniqueness of the vortex sheet solution $(u_\eps, 0_{\R^{M-N}})$ where $u_\eps$ given in \eqref{def_sol_equa} with the obvious identification $u_\eps\equiv (u_\eps, 0_{\R^{M-N}})$ if $M=N$, following the ideas of Ignat-Nguyen-Slastikov-Zarnescu \cite{INSZ_ENS, INSZ_CRAS}.

\begin{theorem}
\label{GL_equatorNM}
Assume that $W$ satisfies \eqref{Eq:26VI18-E1} and $n\geq 1$. If $M \geq N\geq 7$, then for every $\eps>0$, $(u_\eps, 0_{\R^{M-N}})$ given in \eqref{def_sol_equa} is the unique global minimizer of $E_\eps$  in $\mcA$.
\end{theorem}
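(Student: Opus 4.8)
The plan is to follow the strategy of Ignat--Nguyen--Slastikov--Zarnescu~\cite{INSZ_ENS, INSZ_CRAS} adapted to the cylinder $\Omega = B^N\times(0,1)^n$. Write $v_\eps := (u_\eps, 0_{\R^{M-N}})$ and let $u\in\mcA$ be an arbitrary competitor with $|u|\le 1$ (which we may assume after truncation, since that does not increase $E_\eps$). The first step is to expand the excess energy $E_\eps(u) - E_\eps(v_\eps)$. Using that $v_\eps$ solves the Euler--Lagrange system \eqref{E-L} with the Neumann condition on $B^N\times\partial(0,1)^n$ and that $u - v_\eps$ vanishes on $\partial B^N\times(0,1)^n$, an integration by parts removes the linear term, leaving
\be
E_\eps(u)-E_\eps(v_\eps) = \int_\Omega \Big[\tfrac12|\nabla(u-v_\eps)|^2 - \tfrac1{\eps^2}W'(1-f_\eps^2)\,|u-v_\eps|^2\Big]\,dX + \int_\Omega \tfrac1{2\eps^2}\Big[W(1-|u|^2)-W(1-f_\eps^2) + W'(1-f_\eps^2)(|u|^2 - f_\eps^2)\Big]\,dX.
\ee
The convexity of $W$ in \eqref{Eq:26VI18-E1} makes the second integrand nonnegative pointwise (it is the tangent-line remainder of the convex function $W$), so it suffices to show the first (quadratic) functional is nonnegative, with equality characterizing $u = v_\eps$.

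The second step is to bound that quadratic functional from below by exploiting the specific structure of $f_\eps$. Decompose $u-v_\eps$ and use the ODE \eqref{1} satisfied by $f_\eps$: the key observation is that, for the scalar radial profile, $1/\eps^2\,W'(1-f_\eps^2) = -f_\eps''/f_\eps - \frac{N-1}{r}f_\eps'/f_\eps + \frac{N-1}{r^2}$. Substituting this and performing the Hardy/Jäger--Kaul factorization argument — writing $u-v_\eps = f_\eps\,\phi$ (component by component, or more precisely handling the "radial" component $\frac{x}{|x|}\cdot(u-v_\eps)$ against $f_\eps$ and the remaining $M-1$ transverse components against a suitable comparison function) — integration by parts transfers derivatives so that the potential term is absorbed. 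For the transverse components one is reduced, after discarding a nonnegative square, to controlling $\int_\Omega |\nabla w|^2 - \frac{N-1}{|x|^2} w^2$ for $w$ vanishing on the lateral boundary; the Hardy inequality on $B^N$ (with sharp constant $(N-2)^2/4$) gives nonnegativity precisely when $(N-2)^2/4 \ge N-1$, i.e. $N\ge 6$ — and the strict version needed to also get uniqueness holds for $N\ge 7$. Crucially, because the Hardy inequality is applied slicewise in $x\in B^N$ and then integrated in $z\in(0,1)^n$ — with the $z$-derivative contributing only a nonnegative $\int_\Omega |\partial_z w|^2$ and the Neumann condition costing no boundary term — the argument carries over verbatim from $n=0$ to any $n\ge 1$.

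The third step is the equality/uniqueness analysis: strict convexity is \emph{not} assumed here (only \eqref{Eq:26VI18-E1}), so equality in the convexity term only forces $|u| = f_\eps$ a.e. on the set where $W$ is strictly convex near $1-f_\eps^2$; instead uniqueness must come from the strict Hardy inequality for $N\ge 7$, which forces the transverse components of $u-v_\eps$ to vanish, and then the equality cases in the factorization squares plus the boundary condition force the radial component to equal $f_\eps\,\frac{x}{|x|}$ as well, so $u \equiv v_\eps$. I expect the main obstacle to be bookkeeping in the factorization: cleanly separating the radial component $\rho := \frac{x}{|x|}\cdot(u-v_\eps)$ from the $(M-1)$-dimensional transverse part, verifying that all integrations by parts near $r=0$ and on the lateral/top/bottom boundaries produce no stray terms (one needs $|u-v_\eps|^2/|x|^2 \in L^1$, which follows from $u-v_\eps\in H^1_0$ in the radial variable combined with the vortex structure), and checking that the $g_\eps=0$ case genuinely makes the transverse comparison function the constant profile so that only the pure Hardy constant $N-1$ appears — this is exactly where the threshold $N\ge 7$ enters and where one must be careful that no better-than-Hardy cancellation is available (there is none, hence the conjectural status for $2\le N\le 6$).
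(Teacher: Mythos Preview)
Your strategy matches the paper's: lower-bound the excess energy via convexity of $W$ by the quadratic form $\int |\nabla v|^2 - \tfrac{1}{\eps^2}W'(1-f_\eps^2)|v|^2$, then factor against $f_\eps$ and invoke Hardy, with the $z$-variable handled by slicing. Two points need correction.

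First, the radial/transverse split is a red herring. The paper applies the \emph{same} scalar factorization $\psi = f_\eps w$ to each of the $M$ components of $u-v_\eps$; there is no special treatment of $\tfrac{x}{|x|}\cdot(u-v_\eps)$, and uniqueness follows for all components simultaneously from the strict Hardy margin, not from a separate analysis of the radial part.

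Second --- and this is a genuine gap in your sketch --- after writing $\psi = f_\eps w$ and using the ODE \eqref{1} you obtain
\[
F_\eps(\psi)=\int_{B^N} f_\eps^2\Big(|\nabla_x w|^2 - \tfrac{N-1}{r^2}w^2\Big)\,dx,
\]
which carries the \emph{weight} $f_\eps^2$; the unweighted Hardy inequality does not apply directly to this. The paper performs a second factorization $w = r^{-(N-2)/2}\,g$, uses that $r^{-(N-2)}$ is harmonic off the origin, and integrates by parts; a cross-term $-\tfrac12\int \nabla_x(r^{-(N-2)})\cdot\nabla_x(f_\eps^2)\,g^2$ appears, and its nonnegativity is precisely where the monotonicity $f_\eps'>0$ enters. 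Only after dropping that term does one arrive at $\big(\tfrac{(N-2)^2}{4}-(N-1)\big)\int \psi^2/r^2$. Incidentally, $(N-2)^2/4 \ge N-1$ already fails at $N=6$ (since $4<5$): the threshold is $N\ge 7$ for nonnegativity itself, not merely for strictness.
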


\begin{proof}
To simplify notation, we identify 
\be
\label{identif}
u_\eps\equiv (u_\eps, 0_{\R^{M-N}}) \quad \textrm{ when } \quad  M\geq N.
\ee
The proof will be done in several steps following the strategy in \cite[Theorem 1.7]{INSZ_ENS}, \cite[Theorem 1]{INSZ_CRAS}. First, for an arbitrary competitor $u_\eps+v$, we consider the excess energy $E_\eps(u_\eps+v)-E_\eps(u_\eps)$ for the critical point $u_\eps$ defined in \eqref{def_sol_equa} and show a lower estimate by a quadratic energy functional $F_\eps(v)$ coming from the operator $L_\eps$ in \eqref{GL-op}. Second, we show that $F_\eps(v)\geq 0$ using the properties of the radial profile $f_\eps$ in \eqref{1} and a Hardy decomposition method; this proves in particular that $u_\eps$ is a global minimizer of $E_\eps$ over $\mcA$. Finally, by analyzing the zero excess energy states, we conclude to the uniqueness of the global minimizer $u_\eps$. 

\bigskip
\par\noindent{\it Step 1: Excess energy.} 
For any  $v\in H^1_0(B^N\times \R^n; \R^M)$, we have
\begin{align*}
E_\eps(u_\eps+v)-E_\eps(u_\eps)
	&=\int_{\Omega}\Big[ \nabla u_\eps\cdot\nabla v +\frac{1}{2}|\nabla v|^2\Big]\,dx dz\\
		&\qquad +\frac{1}{2\eps^2}\int_{\Omega} \Big[W(1-|u_\eps+v|^2)-W(1-|u_\eps|^2)\Big]\,dx dz.
\end{align*}
Note that for every $u\in \mcA$, $u_\eps-u$ can be extended to $v\in H^1_0(B^N\times \R^n; \R^M)$. In particular, $v(\cdot, z)\in H^1_0(B^N, \R^M)$ for a.e. $z\in (0,1)^n$.  
The convexity of $W$ yields
\be
\label{111}
W(1-|u_\eps+v|^2)-W(1-|u_\eps|^2)\ge -W'(1-|u_\eps|^2)(|u_\eps+v|^2-|u_\eps|^2).
\ee
Combining the above relations, we obtain the following lower bound for the excess energy:
\begin{align}
\nonumber
E_\eps(u_\eps+v)-E_\eps(u_\eps)
	&\ge \int_{\Omega} \Big[\nabla u_\eps\cdot\nabla v-\frac{1}{\eps^2} W'(1-f_\eps^2)u_\eps\cdot v\Big]\,dxdz\nonumber\\\nonumber
		&\qquad\qquad +\int_{\Omega} \Big[ \frac{1}{2} |\nabla v|^2-\frac{1}{2\eps^2}W'(1-f_\eps^2)|v|^2\Big]dx dz\\
\label{rel:endif3} &=\int_{\Omega} \frac 12 |\nabla_z v|^2\, dx dz+\int_{(0,1)^n}\frac12 F_\eps(v(\cdot, z))\, dz,
\end{align}
where we used the PDE \eqref{E-L} and introduced the quadratic functional $$F_\eps(\Psi)=\int_{B^N} \Big[  |\nabla_x \Psi|^2-\frac{1}{\eps^2}W'(1-f_\eps^2)|\Psi|^2\Big]dx,$$
for all  $\Psi\in H^1_0(B^N; \R^M)$.
Note that the $L^2$-gradient of $F_\eps$ represents a part of the linearization of the PDE \eqref{E-L} at $u_\eps$ and it is given by the operator $L_\eps$  in \eqref{GL-op}. The rest of the proof is devoted to show that for $N\geq 3$:
$$F_\eps(\psi)\geq \bigg(\frac{(N-2)^2}{4}-(N-1)\bigg)\int_{{B^N}}\frac{\psi^2}{r^2}\, dx, \quad \forall \psi\in H^1_0(B^N)$$
yielding the conclusion for $N\geq 7$ and also the inequality for the first eigenvalue $\ell(\eps)$ of the operator
$
L_\eps$ in \eqref{GL-op} in $B^N$: \footnote{Observe the difference between dimension $N\geq 7$ and the case of dimension $2\leq N\leq 6$ where we have $\ell(\eps)<0$ for $\eps<\eps_N$ in \eqref{epsN}; moreover, if $N\leq 6$, then $\ell(\eps)$ blows up as $-\frac1{\eps^2}$ as $\eps\to 0$ (see \cite[Lemma 2.3]{IN}).}
$$\ell(\eps)\geq \frac{(N-2)^2}{4}-(N-1)>0, \quad \forall \eps>0 \quad \textrm{and} \quad N\geq 7.$$ To keep the paper self-contained, we explain in the following the simple idea used in \cite{INSZ_ENS, INSZ_CRAS}.

\bigskip
\par\noindent{\it Step 2: A factorization argument}. As $f_\eps>0$ is a smooth positive radial profile in $(0,1)$, we decompose every scalar test function $\psi\in C^\infty_c(B^N\setminus \{0\}; \R)$ as follows
$$\psi(x)=f_\eps(r) w(x), \quad \forall x\in B^N\setminus \{0\}, \, r=|x|,$$  
where $w\in C^\infty_c(B^N\setminus \{0\}; \R)$. Integrating by parts (see e.g. \cite[Lemma~A.1]{INSZ3}), we deduce:
\begin{align*}
F_\eps(\psi)=\int_{B^N} L_\eps \psi\cdot \psi\, dx&=\int_{B^N} w^2 (L_\eps f_\eps\cdot f_\eps)\, dx+\int_{B^N} f_\eps^2 |\nabla_x w|^2\, dx\\
& =\int_{B^N} f_\eps^2 \bigg( |\nabla_x w|^2-\frac{N-1}{r^2}w^2\bigg)\, dx,
\end{align*}
because $L_\eps f_\eps\cdot f_\eps=-\frac{N-1}{r^2} f_\eps^2$ in ${B^N}$ by \eqref{1}. 
Furthermore, we decompose $$w=\f g \quad \textrm{in} \quad B^N\setminus \{0\}$$ with $\f=|x|^{-\frac{N-2}{2}}$ satisfying 
$$-\Delta_x \f=\frac{(N-2)^2}{4|x|^2}\f \quad \textrm{ in } \, \R^N\setminus\{0\}$$ and $g\in C^\infty_c({B^N}\setminus \{0\}; \R)$. Then
\begin{align*}
|\nabla_x w|^2=|\nabla_x g|^2 \f^2+|\nabla_x \f|^2 g^2+\frac12 \nabla_x (\f^2)\cdot \nabla_x (g^2).
\end{align*}
As $|\nabla_x \f|^2=\frac{(N-2)^2}{4|x|^2}\f^2$ and $\f^2$ is harmonic in ${B^N}\setminus\{0\}$ (recall that $N\geq 7$), integration by parts yields
\begin{align}
\nonumber
F_\eps(\psi)&=\int_{B^N} f_\eps^2 \bigg( |\nabla_x g|^2 \f^2+\frac{(N-2)^2}{4r^2}\f^2g^2-\frac{N-1}{r^2}\f^2 g^2\bigg)\, dx-\frac12 \int_{{B^N}} \nabla_x (\f^2)\cdot \nabla_x (f_\eps^2)g^2\, dx\\
\nonumber &\geq \int_{B^N} f_\eps^2 |\nabla_x g|^2 \f^2\, dx+\bigg(\frac{(N-2)^2}{4}-(N-1)\bigg)\int_{{B^N}}\frac{f_\eps^2}{r^2}\f^2g^2\, dx\\
\label{23}
&\geq \bigg(\frac{(N-2)^2}{4}-(N-1)\bigg)\int_{{B^N}}\frac{\psi^2}{r^2}\, dx\geq 0,
\end{align}
where we used $N\geq 7$ and $\frac12\nabla_x (\f^2)\cdot \nabla_x (f_\eps^2)=2\f \f' f_\eps f'_\eps\leq 0$  in ${B^N}\setminus \{0\}$ because $\f, f_\eps, f'_\eps>0$ and $\f'<0$ in $(0,1)$ (see e.g. \cite{HH, ODE_INSZ, IN}).

\bigskip
\par\noindent{\it Step 3: We prove that $F_\eps(\Psi)\geq 0$ for every $\Psi\in H^1_0(B^N; \R^M)$; moreover, $F_\eps(\Psi)=0$ if and only if $\Psi=0$.} Let $\Psi\in H^1_0(B^N; \R^M)$. As a point in $\R^N$ has zero $H^1$ capacity, a standard density argument implies the existence of a sequence $\Psi_k\in C^\infty_c({B^N}\setminus \{0\}; \R^M)$ such that $\Psi_k\to \Psi$ in $H^1(B^N, \R^M)$ and a.e. in $B^N$. On the one hand, by 
definition of $F_\eps$, since $W'(1-f_\eps^2)\in L^\infty$, we deduce that $F_\eps(\Psi_k)\to F_\eps(\Psi)$ as $k\to \infty$.
On the other hand, by \eqref{23} and Fatou's lemma, we deduce
\begin{align*}
\liminf_{k\to \infty} F_\eps(\Psi_k)&\geq  \bigg(\frac{(N-2)^2}{4}-(N-1)\bigg)\liminf_{k\to \infty} \int_{{B^N}}\frac{|\Psi_k|^2}{r^2}\, dx\\
&\geq \bigg(\frac{(N-2)^2}{4}-(N-1)\bigg) \int_{{B^N}}\frac{|\Psi|^2}{r^2}\, dx.
\end{align*}
Therefore, we conclude that
$$F_\eps(\Psi)\geq \bigg(\frac{(N-2)^2}{4}-(N-1)\bigg) \int_{{B^N}}\frac{|\Psi|^2}{r^2}\, dx\geq 0,\quad \forall \Psi\in H^1_0(B^N; \R^M).$$
Moreover, $F_\eps(\Psi)=0$ if and only if $\Psi=0$.

\bigskip
\par\noindent{\it Step 4: Conclusion.}  By \eqref{rel:endif3} and Step 3,  we deduce that $u_\eps$ is a global minimizer of 
$E_\eps$ over $\mcA$. For uniqueness, assume that $\hat u_\eps$ is  another global minimizer of $E_\eps$ over $\mcA$. If $v:=\hat u_\eps-u_\eps$,
then $v$ can be extended in $H^1_0(B^N\times \R^n; \R^M)$ and by Steps 1 and 3, we have that 
$$0=E_\eps(\hat u_\eps)-E_\eps(u_\eps)\geq \int_{\Omega} \frac 12 |\nabla_z v|^2\, dx dz+\int_{(0,1)^n}\frac12 
F_\eps(v(\cdot, z))\, dz\geq 0,$$ which yields $\nabla_z v=0$ a.e. in $\Omega$ and $F_\eps(v(\cdot, z))=0$ for a.e. $z\in (0,1)^n$. In other words, $v=v(x)$ and Step 3 implies that $v=0$, i.e., $\hat u_\eps=u_\eps$ in $\Omega$.
 \end{proof}

\begin{remark}
\label{import-rem}
Theorem \ref{GL_equatorNM} reveals the following fact: if for $n=0$ (i.e., $\Omega=B^N$) and some $\eps>0$, a (radially symmetric) critical point $\hat u_\eps:B^N\to \RR^M$ of $E_\eps$ in $\mcA$ is proved to be a global minimizer (and additionally, if one proves that it is the unique global minimizer),  then for any dimensions $n\geq 1$ (i.e., $\Omega=B^N\times (0,1)^n$), this $z$-invariant solution $\hat u_\eps$ of \eqref{E-L} in $B^N\times (0,1)^n$ is also a global minimizer (and additionally, it is the unique minimizer) of $E_\eps$ in $\mcA$. This is because for every $u:B^N\times (0,1)^n\to \RR^M$ with $u\in \mcA$, then $u(\cdot, z)$ satisfies the degree-one vortex boundary condition on $\partial B^N$ for every $z\in (0,1)^n$ yielding
\begin{align*}
E_\eps(u)&=\int_{\Omega} \frac 12 |\nabla_z u|^2\, dx dz+\int_{(0,1)^n}E_\eps(u(\cdot, z))\, dz\\
&\geq \int_{(0,1)^n}E_\eps(\hat u_\eps)\, dz=E_\eps(\hat u_\eps);
\end{align*}
the equality occurs only when $u$ is $z$-invariant. Thus,  if the uniqueness of the global minimizer $\hat u_\eps$ holds in $B^N$ (i.e., $n=0$), then this yields uniqueness of the global minimizer $\hat u_\eps$ in $\Omega=B^N\times (0,1)^n$ (as a map independent of $z$-variable) for every $n\geq 1$.
\end{remark}

\begin{proof}[Proof of Theorem \ref{eps_large}] We prove the result in the more general setting of $\R^M$-valued maps $u$ belonging to $\mcA$ for $M\geq N$ using the same identification \eqref{identif}. By Step 1 in the proof of Theorem \ref{GL_equatorNM} (see \eqref{rel:endif3}), the excess energy is estimated for every  $v\in H^1_0(B^N\times \R^n; \R^M)$:
\begin{align*}
E_\eps(u_\eps+v)-E_\eps(u_\eps) &\ge \int_{\Omega} \frac 12 |\nabla_z v|^2\, dx dz+\frac12 \int_{(0,1)^n} <L_\eps v(\cdot, z), v(\cdot, z) >\, dz,
\end{align*}
where $L_\eps$ is the operator in \eqref{GL-op} and $<\cdot,\cdot>$ denotes the duality pairing $(H^{-1},H^1_0)$ in $B^N$. If $\eps\geq \eps_N$, then $\ell(\eps)\geq 0$ (by \cite[Lemma 2.3]{IN}) and therefore, \footnote{\label{foot} Indeed, for a scalar function $v\in C^\infty_c(B^N\setminus \{0\}, \R)$, if  
$\psi=\psi(r)>0$ is a radial first eigenfunction of $L_\eps$ in $B^N$ with zero Dirichlet data, i.e., $L_\eps \psi=\ell(\eps)\psi$ in $B^N$, then the duality pairing $(H^{-1},H^1_0)$ term in $B^N$ writes (see e.g. \cite[Lemma~A.1]{INSZ3}):
$$
<L_\eps v, v>\, \, =\int_{B^N} \psi^2 |\nabla (\frac{v}\psi)|^2\, dx+\int_{B^N} (\frac{v}\psi)^2 L_\eps \psi \cdot \psi\, dx=\int_{B^N} \psi^2 |\nabla (\frac{v}\psi)|^2\, dx+\ell(\eps) \|v\|^2_{L^2(B^N)}.$$ By a density argument, Fatou's lemma yields for every scalar function $v\in H^1_0(B^N, \R)$, $$<L_\eps v, v>\, \, \geq \int_{B^N} \psi^2 |\nabla (\frac{v}\psi)|^2\, dx+\ell(\eps) \|v\|^2_{L^2(B^N)}.$$ } 
\be
\label{spec}
<L_\eps v(\cdot, z), v(\cdot, z) >\, \, \geq \ell(\eps) \|v(\cdot, z)\|^2_{L^2(B^N)}\geq 0 \quad \textrm{ for a.e. } z\in (0,1)^n,
\ee
where we used that $v(\cdot, z)\in H^1_0(B^N; \R^M)$ for a.e. $z\in (0,1)^n$. 
Thus, $u_\eps$ is a minimizer of $E_\eps$ over $\mcA$. It remains to prove uniqueness of the global minimizer. For that, if $\hat u_\eps$ is  another global minimizer of $E_\eps$ over $\mcA$, setting $v:=\hat u_\eps-u_\eps$,
then $v$ can be extended in $H^1_0(B^N\times \R^n; \R^M)$ and 
\be
\label{egal}
0=E_\eps(\hat u_\eps)-E_\eps(u_\eps)\geq \int_{\Omega} \frac 12 |\nabla_z v|^2\, dx dz+\frac{\ell(\eps)}2
\int_{(0,1)^n} 
\int_{B^N} |v(x, z)|^2\, dx dz\geq 0
\ee
because $\ell(\eps)\geq 0$ for $\eps\geq \eps_N$. Thus, equality holds in the above inequalities.

\bigskip
\par\noindent{\it Case 1: $\eps>\eps_N$}. In this case, $\ell(\eps)>0$ and we conclude that $v=0$ in $\Omega$, i.e., $\hat u_\eps=u_\eps$ in $\Omega$.

\bigskip
\par\noindent{\it Case 2: $\eps=\eps_N$ and $W$ is in addition strictly convex}. In this case, $\ell(\eps)=0$ and by \eqref{egal}, $v$ is invariant in $z$, i.e., $v=v(x)$ and equality holds in \eqref{spec} and in \eqref{rel:endif3}, thus, equality holds in \eqref{111}. Note that by footnote \ref{foot} the equality in \eqref{spec} holds if and only if $v=\lambda \psi$ for some $\lambda \in \R^M$, where $\psi=\psi(r)$ is a radial first eigenfunction of $L_\eps$ in $B^N$ with zero Dirichlet data, 
in particular $\psi>0$ in $[0,1)$ and $\psi(1)=0$. Also, by the strict convexity of $W$, the equality \eqref{111} is achieved if and only if $|u_\eps+v|=|u_\eps|$ a.e. in $\Omega$, that is, $|v|^2+2v\cdot u_\eps=0$ a.e. in $B^N$. It yields
\be
\label{egal12}
|\lambda|^2\psi^2+2f_\eps(|x|)(\frac{x}{|x|}, 0_{\R^{M-N}})\cdot \lambda \psi=0 \quad \textrm{for every } x\in B^N.\ee 
Dividing by $\psi$ in $B^N$, the continuity up to the boundary $\partial B^N$ leads to $2f_\eps(|x|)(x, 0_{\R^{M-N}})\cdot \lambda=0$ for every $x\in \partial B^N$ since $\psi=0$ on $\partial B^N$. As $f_\eps(1)=1$, it follows that the first $N$ components of $\lambda$ vanish. Coming back to \eqref{egal12}, we conclude that $|\lambda|^2\psi^2=0$ in $B^N$, i.e., $\lambda=0$ and so, $v=0$ and $\hat u_\eps=u_\eps$ in $\Omega$.
\end{proof}

\section{Properties of escaping vortex sheet solutions when $M\geq N+1$}
\label{sec-3}

\subsection{Minimality of escaping vortex sheet solutions}

In this section, we require the additional assumption of strict convexity of $W$ in order to determine the set of global minimizers of $E_\eps$ over $\mcA$ in \eqref{mca}. However, $W$ is assumed to be only $C^1$ not $C^2$. We prove that every bounded solution to \eqref{E-L} escaping in some direction is a global minimizer of $E_\eps$ over $\mcA$; moreover, such global minimizer is unique up to an orthogonal transformation of $\R^M$ keeping invariant the space $\R^N\times \{0_{\R^{M-N}}\}$.

\begin{theorem}\label{thm:main}
We consider the dimensions $n\geq 1$ and $M> N \geq 2$, the potential $W\in C^1((-\infty,1],\R)$ satisfying \eqref{Eq:26VI18-E1} and an escaping direction $e\in \bS^{M-1}$. Fix any $\eps>0$ and let $w_\eps \in H^1\cap L^\infty(\Omega,\R^M)$ be a critical point  of the energy $E_\eps$ in the set $\mcA$ which is positive in the direction $e$ inside $\Omega$:
\be\label{ass:Phi}
w_\eps \cdot {e}>0\textrm{ a.e. in }\Omega.
\ee 
Then $w_\eps$ is a global minimizer of $E_\eps$ in $\mcA$. If in addition $W$ is strictly convex, then all minimizers of  $E_\eps$ in $\mcA$ are given by $Rw_\eps$ where $R\in O(M)$ is an orthogonal transformation of $\R^M$ satisfying $Rp=p$ for all $p\in \R^N\times \{0_{\R^{M-N}}\}$.
\end{theorem}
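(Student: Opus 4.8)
The plan is to mirror the proof of Theorem~\ref{GL_equatorNM}: compare an arbitrary competitor with $w_\eps$, reduce the excess energy to a quadratic form via the convexity of $W$ and the Euler--Lagrange equation, show that this quadratic form is nonnegative by a ground-state substitution built from the positive function $w_\eps\cdot e$, and finally read off uniqueness from the equality cases.

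\textbf{Normalization.} Since $w_\eps$ is a bounded weak solution of \eqref{E-L} and $W\in C^1$, elliptic regularity (interior bootstrap; Schauder near the smooth Dirichlet part $\Gamma_D:=\partial B^N\times(0,1)^n$; even reflection across the Neumann part $\Gamma_N:=B^N\times\partial(0,1)^n$) gives $w_\eps\in C(\overline\Omega;\R^M)$. Evaluating $w_\eps\cdot e\ge 0$ on $\Gamma_D$, where $w_\eps=(x,0_{\R^{M-N}})$, and using $x\mapsto-x$ on $\bS^{N-1}$, forces $e\in\{0_{\R^N}\}\times\R^{M-N}$; composing $w_\eps$ with an orthogonal map of the form $\mathrm{diag}(I_N,\tilde R_0)$ — which fixes $\R^N\times\{0_{\R^{M-N}}\}$ and preserves both $E_\eps$ and $\mcA$ — I may assume $e=e_M$. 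Then $\psi:=w_\eps\cdot e=(w_\eps)_M$ satisfies $\psi>0$ in $\Omega$, $\psi=0$ on $\Gamma_D$, $\partial_\nu\psi=0$ on $\Gamma_N$, and, dotting \eqref{E-L} with the constant vector $e$, $-\Delta\psi=\frac1{\eps^2}W'(1-|w_\eps|^2)\psi$ in $\Omega$.

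\textbf{Excess energy and minimality.} For $u\in\mcA$ with $E_\eps(u)<\infty$, put $\varphi:=u-w_\eps\in H^1(\Omega;\R^M)$, which vanishes on $\Gamma_D$. Using $W(1-|u|^2)\ge W(1-|w_\eps|^2)-W'(1-|w_\eps|^2)(|u|^2-|w_\eps|^2)$ (convexity) and cancelling the terms linear in $\varphi$ via the weak form of \eqref{E-L} tested against $\varphi$ — exactly as in the computation leading to \eqref{111}--\eqref{rel:endif3} — I obtain
\[
E_\eps(u)-E_\eps(w_\eps)\ \ge\ \tfrac12\, Q(\varphi),\qquad
Q(\Psi):=\int_\Omega\Big(|\nabla\Psi|^2-\tfrac1{\eps^2}W'(1-|w_\eps|^2)\,|\Psi|^2\Big)\,dX .
\]
Writing each scalar component as $\varphi_j=\psi\,\eta_j$ and integrating by parts against the equation for $\psi$ — rigorously by approximating $\eta_j$ with functions supported away from $\{\psi=0\}$ and invoking Fatou, cf.\ \cite[Lemma~A.1]{INSZ3}, the boundary contributions vanishing since $\psi=0$ on $\Gamma_D$ and $\partial_\nu\psi=0$ on $\Gamma_N$ — yields $Q(\varphi_j)=\int_\Omega\psi^2|\nabla\eta_j|^2\,dX\ge 0$, hence $Q(\varphi)=\sum_jQ(\varphi_j)\ge 0$ and $E_\eps(u)\ge E_\eps(w_\eps)$.

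\textbf{Uniqueness.} If $W$ is strictly convex and $u$ is another minimizer, every inequality above is an equality. Strict convexity forces $|u|=|w_\eps|$ a.e.\ in $\Omega$; and $Q(\varphi)=0$. Since $-\Delta-\frac1{\eps^2}W'(1-|w_\eps|^2)$ with the mixed boundary conditions is self-adjoint with compact resolvent, $Q\ge 0$, and $Q(\psi)=0$ (test its equation against $\psi$), the value $0$ is its lowest, simple eigenvalue with eigenfunction $\psi>0$, so $Q(\varphi_j)=0$ gives $\varphi_j=c_j\psi$, i.e.\ $\varphi=c\,\psi$ with $c\in\R^M$; equivalently $u=Aw_\eps$ with $A:=I_M+c\,e^{\mathsf T}$. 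Then $0=|u|^2-|w_\eps|^2=w_\eps^{\mathsf T}(A^{\mathsf T}A-I_M)w_\eps=(e\cdot w_\eps)(d\cdot w_\eps)$ with $d:=2c+|c|^2 e$, and $e\cdot w_\eps=\psi>0$ a.e.\ forces $d\cdot w_\eps\equiv 0$ on $\overline\Omega$; on $\Gamma_D$ this gives $d\perp\R^N\times\{0_{\R^{M-N}}\}$. With $L:=\overline{\mathrm{span}}\{w_\eps(X):X\in\overline\Omega\}$ one has $\R^N\times\{0_{\R^{M-N}}\}\subseteq L\subseteq d^\perp$, so $|Ay|^2-|y|^2=(e\cdot y)(d\cdot y)=0$ for all $y\in L$, i.e.\ $A|_L$ is a linear isometry of $L$; any orthogonal extension $R\in O(M)$ of $A|_L$ fixes $\R^N\times\{0_{\R^{M-N}}\}$ pointwise and satisfies $Rw_\eps=Aw_\eps=u$. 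Conversely, each such $Rw_\eps$ lies in $\mcA$ with $E_\eps(Rw_\eps)=E_\eps(w_\eps)$, hence is a minimizer; this is the asserted description.

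\textbf{Main difficulty.} The crucial input is the positivity \eqref{ass:Phi}: it is precisely what makes $\psi=w_\eps\cdot e$ a positive solution of the linearized scalar equation, hence what forces $Q\ge 0$. The two delicate points are the integration by parts in the ground-state substitution near $\Gamma_D$, where $\psi$ degenerates — handled by the density/Fatou argument of \cite[Lemma~A.1]{INSZ3} — and the passage from $u=Aw_\eps$ with $A$ a generally non-orthogonal rank-one perturbation of the identity to $u=Rw_\eps$ with $R\in O(M)$: the resolution is that $A$ need only be orthogonal on $L=\overline{\mathrm{span}}\,w_\eps(\overline\Omega)$, which holds because the defect $|Ay|^2-|y|^2$ factors through the linear functional $y\mapsto d\cdot y$ that annihilates the range of $w_\eps$, hence all of $L$.
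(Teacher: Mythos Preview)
Your proof is correct. The minimality half (normalization of $e$, excess-energy lower bound via convexity and the Euler--Lagrange equation, ground-state substitution with $\psi=w_\eps\cdot e$ and the density/Fatou passage) is essentially identical to the paper's argument.

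The classification of minimizers under strict convexity takes a genuinely different route. The paper first replaces $w_\eps$ by the auxiliary map
\[
\tilde w_\eps=\big(w_{\eps,1},\dots,w_{\eps,N},\,0_{\R^{M-N-1}},\,\sqrt{w_{\eps,N+1}^2+\dots+w_{\eps,M}^2}\big),
\]
which is again a minimizer, so that one may assume $w_{\eps,N+1}=\dots=w_{\eps,M-1}\equiv 0$; from $\varphi=\lambda\psi$ and $|w_\eps+\varphi|=|w_\eps|$ it then obtains by direct computation $\lambda_1=\dots=\lambda_N=0$ and $\lambda_{N+1}^2+\dots+(\lambda_M+1)^2=1$, from which $R\in O(M)$ is read off explicitly. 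You bypass this preliminary reduction: writing $u=(I_M+ce^{\mathsf T})w_\eps=Aw_\eps$, you show that the defect $|Ay|^2-|y|^2=(e\cdot y)(d\cdot y)$ vanishes on the span $L$ of the range of $w_\eps$, so $A|_L$ is an isometry fixing $\R^N\times\{0\}\subseteq L$ pointwise, and any orthogonal extension $R$ of $A|_L$ does the job. Your argument is coordinate-free and treats general $w_\eps$ in one stroke; the paper's is more hands-on and makes $R$ explicit. A minor further difference: you deduce $\varphi_j=c_j\psi$ from $Q(\varphi_j)=0$ via spectral simplicity of the positive ground state, whereas the paper reads it off directly from the factorization inequality $Q(\varphi_j)\ge\int_\Omega\psi^2|\nabla(\varphi_j/\psi)|^2\,dX$ already established through Fatou.
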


This result is reminiscent from \cite[Theorem 1.3]{INSZ_ENS}. However, it doesn't apply directly as the domain $\Omega$ is not smooth here and the boundary condition is a mixed Dirichlet-Neumann condition (w.r.t. Dirichlet boundary condition in \cite{INSZ_ENS}). 

\begin{proof}
In the following, we denote the variable $X=(x,z)\in \Omega=B^N\times (0,1)^n$. As a critical point of $E_\eps$ in the set $\mcA$, $w_\eps:\Omega \to \R^M$ satisfies 
\be
\label{E-L-M}
\left\{\begin{array}{l}
-\Delta w_\eps=\frac1{\eps^2} w_\eps \, W'(1-|w_\eps|^2)\quad \textrm{ in } \, \Omega,\\
\frac{\partial w_\eps}{\partial z}=0 \quad \textrm{ on } \, B^N\times \partial (0,1)^n,\\
w_\eps(x,z)=(x, 0_{\R^{M-N}}) \quad \textrm{ on } \, \partial B^N\times (0,1)^n.
\end{array}
\right.
\ee
In particular, $\Delta w_\eps \in L^\infty(\Omega)$ (as $W'$ is continuous and $w_\eps\in L^\infty(\Omega)$); then standard elliptic regularity for the mixed boundary conditions in \eqref{E-L-M} yields $w_\eps\in C^1(\bar \Omega, \R^M)$. Thus, \eqref{ass:Phi} implies $w_\eps\cdot e\geq 0$ in $\bar \Omega$ and the vortex boundary condition in $\mcA$ implies that $e$ is orthogonal to $\R^N\times \{0_{\R^{M-N}}\}$.
By the invariance of the energy and the vortex boundary condition under the transformation $w_\eps(X)\mapsto Rw_\eps(X)$ for any $R\in O(M)$ satisfying $Rp=p$ for all $p\in \R^N\times \{0_{\R^{M-N}}\}$, we know that $Rw_\eps$ is also a critical point of $E_\eps$ over $\mcA$; thus, we can assume that 
\be
\label{def-enu}
e:=e_M=(0,\dots, 0, 1)\in \R^M.
\ee
We prove the result in several steps.

\bigskip
\par\noindent{\it Step 1: Excess energy.} 
By Step 1 in the proof of Theorem \ref{GL_equatorNM}, we have for any  $v\in H^1_0(B^N\times \R^n, \R^M)$: 
\be\label{rel:endif4}
E_\eps(w_\eps+v)-E_\eps(w_\eps) \ge  \int_\Omega \Big[ \frac 12 |\nabla v|^2-\frac{1}{2\eps^2} W'(1-|w_\eps|^2) |v|^2\Big]\,dX =: \frac12G_\eps(v)
\ee
(note that $G_\eps(v)$ is larger than the integration of $F_\eps(v)$ in \eqref{rel:endif3} over $(0,1)^n$ as it contains also the integration of $|\nabla_z v|^2$). 
If in addition $W$ is strictly convex, then equality holds above if and only if $|w_\eps(X)+v(X)|=|w_\eps(X)|$ a.e. $X\in\Omega$ (by \eqref{111}).

\bigskip
\par\noindent{\it Step 2: Global minimality of $w_\eps$.} It is enough to show that the quadratic energy $G_\eps(v)$ defined in \eqref{rel:endif4} is nonnegative for any  $v\in H^1_0(B^N\times \R^n, \R^M)$. Denoting the $M$-component of $w_\eps$ by $\phi:=w_\eps \cdot e_M$, we know that $\phi\in C^1(\bar \Omega)$, $\phi\geq 0$ in $\Omega$ (by \eqref{ass:Phi}) and satisfies the Euler-Lagrange equation in the sense of distributions:
\be\label{eq:phi}
\left\{\begin{array}{l}
-\Delta \phi -\frac{1}{\eps^2}W'(1-|w_\eps|^2) \phi =0 \, \,  \textrm{in } \, \Omega,\\ 
\phi=0  \, \,  \textrm{ on } \, \partial B^N\times (0,1)^n,\\ 
\frac{\partial \phi}{\partial z}=0 \, \,  \textrm{ on } \, B^N\times \partial (0,1)^n.
\end{array}
\right.
\ee
Note that by strong maximum principle, $\phi>0$ in $\Omega$ (as $\phi$ cannot be identically $0$ in $\Omega$ by \eqref{ass:Phi}). Moreover, Hopf's lemma yields $\phi>0$ on $B^N\times \partial (0,1)^n$ as $\frac{\partial \phi}{\partial z}$ vanishes there. Now, for any smooth map $v \in C_c^\infty (B^N\times \R^{n}; \R^M)$, we can define $\Psi= \frac{v}{\phi} \in C^1(\bar \Omega; \R^M)$ with $\Psi=0$ in a neighborhood of $\partial B^N\times (0,1)^n$ and integration by parts yields for every component $v_j=\phi \Psi_j$ with $1 \leq j \leq M$ (as in \cite[Lemma~A.1.]{INSZ3}):
\begin{align*}
G_\eps(v_j)&=\int_\Omega \Big[|\nabla v_j|^2-\frac{1}{\eps^2}W'(1-|w_\eps|^2) \phi \cdot \phi \Psi_j^2\Big]\,dX\\
&\stackrel{\eqref{eq:phi}}{=} \int_\Omega \Big[|\nabla (\phi \Psi_j)|^2-\nabla \phi \cdot \nabla (\phi\,\Psi_j^2) \Big]\,dX
= \int_\Omega \phi^2 |\nabla \Psi_j|^2\, dX.
\end{align*}
As $G_\eps$ is continuous in strong $H^1(\Omega)$ topology (since $W'(1-|w_\eps|^2)\in L^\infty(\Omega)$), by density of $C_c^\infty (B^N\times \R^n; \R^M)$ in $H^1_0(B^N \times \R^n; \R^M)$, Fatou's lemma yields 
$$G_\eps(v)\geq \int_\Omega \phi^2 |\nabla \big(\frac{v}{\phi} \big)|^2\,dX\ge 0, \quad \forall v \in H^1_0(B^N \times \R^n; \R^M).$$ 
As a consequence of \eqref{rel:endif4}, we deduce that $w_\eps$ is a minimizer of $E_\eps$ over $\mcA$.
Moreover, $G_\eps(v)=0$ if and only if there exists a (constant) vector $\lambda \in \RR^M$ such that $v = \lambda \phi$ for a.e. $x\in\Omega$.

\bigskip
\par\noindent{\it Step 3: Set of global minimizers.} From now on, we assume that $W$ is strictly convex and denote $w_\eps=(w_{\eps,1}, \dots, w_{\eps,M})$. Note that the map
\be
\label{transf_n}
\tilde w_\eps:=(w_{\eps,1},\dots, w_{\eps,N}, 0_{\R^{M-N-1}}, \sqrt{w^2_{\eps, N+1}+\dots+w_{\eps, M}^2}) 
\ee
belongs to $\mcA$, $|\tilde w_\eps|=|w_\eps|$ and $|\nabla \tilde w_\eps|\leq |\nabla w_\eps|$ in $\Omega$, so $E_\eps(w_\eps)\geq E_\eps(\tilde w_\eps)$ and 
$$\sqrt{w^2_{\eps, N+1}+\dots+w_{\eps, M}^2}\geq w_{\eps, M}=\phi>0\quad \textrm{in} \quad \Omega.$$ Hence, $\tilde w_\eps$ is a minimizer of $E_\eps$ on $\mcA$ (as $w_\eps$ minimizes $E_\eps$ over  $\mcA$ by Step 2). Therefore, up to interchanging $w_\eps$ and $\tilde w_\eps$, we may assume 
$$
\left\{\begin{array}{l}
w_{\eps, N+1}=\dots=w_{\eps, M-1}\equiv 0 \textrm{ in }\Omega\\
w_{\eps, M}=\phi\stackrel{\eqref{ass:Phi}}{>}0\textrm{ in }\Omega.
\end{array}\right.
$$
We now consider another minimizer $U_\eps$ of $E_\eps$ over $\mcA$ and denote  $v:=U_\eps-w_\eps\in H^1_0(B^N \times \R^n; \R^M)$ after a suitable extension. 
 From Steps 1 and 2 we know that $E_\eps(U_\eps)=E_\eps(v+w_\eps)=E_\eps(w_\eps)$, $G_\eps(v)=0$, $|v+w_\eps|=|w_\eps|$ a.e. in $\Omega$ 
 and $v=\lambda \phi$ for some $\lambda=(\lambda_1, \dots, \lambda_M) \in \R^M$ where we recall that $\phi=w_\eps \cdot e_M$.
By continuity of $w_\eps$ and $\phi$, the relation $|v+ w_\eps|=|w_\eps|$ a.e. in $\Omega$ implies $2 w_\eps\cdot v+|v|^2=0$ everywhere in $\Omega$. Since $v=\lambda\phi$, dividing by $\phi>0$ in $\Omega$, we obtain 
\be
\label{numarul}
2\lambda\cdot  w_\eps+\phi |\lambda|^2=0 \hbox{ in } \Omega
\ee
and by continuity, the equality holds also on $\partial \Omega$. 
As for every $(x,z)\in \partial B^N\times (0,1)^n$, $\phi(x,z)=0$ and $w_\eps(x, z)=(x, 0_{\R^{M-N}})$, we deduce that 
$\lambda\cdot (x, 0_{\R^{M-N}})=0$ for every $x\in \partial B^N$. It follows that $\lambda_1=\lambda_2=\dots=\lambda_N=0$ and therefore, recalling that $w_{\eps, N+1}=\dots=w_{\eps, M-1}= 0 \textrm{ in }\Omega$, we have
by \eqref{numarul}:
\[
2\lambda_M\phi+(\lambda_{N+1}^2+\dots+\lambda_M^2)\phi=0\textrm{ in }\Omega.
\]
As $\phi>0$ in $\Omega$, we obtain
\[
\lambda_{N+1}^2+\dots+\lambda_{M-1}^2+(\lambda_{M} +1)^2=1;
\]
hence we can find $R\in O(M)$ such that $Rp=p$ for all $p\in \R^N\times \{0_{\R^{M-N}}\}$ and $$Re_M=(0,\dots,0,\lambda_{N+1},\dots,\lambda_{M-1}, \lambda_M+1).$$ This implies $U_\eps=w_\eps+v=w_\eps+\lambda \phi=Rw_\eps$ as required. The converse statement is obvious: if $w_\eps$ is a minimizer of $E_\eps$ over $\mcA$ and $R\in O(M)$ is a transformation fixing all points of $\R^N\times \{0_{\R^{M-N}}\}$, then $Rw_\eps$ is also a minimizer of $E_\eps$ over $\mcA$ (because $E_\eps$ and the boundary condition in $\mcA$ are invariant under such orthogonal transformation $R$). 
\end{proof}

\begin{remark}
\label{rem:dichoto}
Note that if $n\geq 1$, $M>N\geq 7$ and $W$ satisfies \eqref{Eq:26VI18-E1} (not necessarily strictly convex), then there are no bounded critical points  of the energy $E_\eps$ in the set $\mcA$ escaping in a direction $e\in \mathbb{S}^{M-1}$. Indeed, if such an escaping critical point of $E_\eps$ in $\mcA$ exists, then by Theorem \ref{thm:main}, this solution would be a global minimizer of $E_\eps$ in $\mcA$ which is a contradiction with the uniqueness of the global minimizer $(u_\eps, 0_{\R^{M-N}})$ in \eqref{def_sol_equa} (that is non-escaping) proved in Theorem \ref{GL_equatorNM}.
 \end{remark}

\subsection{Escaping radial profile}

Let $M\geq N+1$. We give a necessary and sufficient condition for the existence of an escaping radial profile $(\tf_\eps, g_\eps>0)$ in $(0,1)$ to the system \eqref{Eq:feegeeH1}--\eqref{Eq:20III21-feegeeBC}; we also prove uniqueness, minimality and monotonicity of the escaping radial profile. For that, in the context of $E_\eps$ defined over $\mcA$, we introduce the functional
\begin{align*}
I_{\eps}(f,g) 
	&= \frac{1}{|\Sphere^{N-1}|} E_{\eps}\bigg((f(r)\frac{x}{|x|}, 0_{\R^{M-N-1}}, g(r))\bigg)\\
	&= \frac{1}{2}\int_0^1 \Big[(f')^2 + (g')^2 + \frac{N-1}{r^2} f^2 + \frac{1}{\eps^2} W(1 - f^2 - g^2) \Big]\,r^{N-1}\,dr
\end{align*}
where $(f,g)$ belongs to
\be
\label{def_b}
\mcB = \Big\{(f,g): r^{\frac{N-1}{2}}f', r^{\frac{N-3}{2}} f, r^{\frac{N-1}{2}}g', r^{\frac{N-1}{2}}g \in L^2(0,1), f(1) = 1, g(1) = 0\Big\}.
\ee

The following result is reminiscent from Ignat-Nguyen \cite[Theorem 2.4]{IN} (for $\tilde W\equiv 0$). The proof of \cite[Theorem 2.4]{IN} is rather complicated (as it is proved for some general potentials $\tilde W$). We present here a simple proof that works in our context:

\begin{theorem}\label{Thm:ExtendedExist}
Let $2 \leq N \leq 6$, $M\geq N+1$, $W \in C^2((-\infty,1])$ satisfy \eqref{Eq:26VI18-E1} and be strictly convex.
Consider $\eps_N \in (0,\infty)$ in \eqref{epsN} such that $\ell(\eps_N)=0$.
Then the system \eqref{Eq:feegeeH1}--\eqref{Eq:20III21-feegeeBC} has an escaping radial profile $(\tf_{\eps},g_{\eps} )$ with $g_{\eps} > 0$ in $(0,1)$ if and only if $0 < \eps < \eps_N$. Moreover,  in the case $0 < \eps < \eps_N$, 

\begin{enumerate}

\item $(\tf_{\eps},g_{\eps}>0)$ is the unique escaping radial profile of \eqref{Eq:feegeeH1}--\eqref{Eq:20III21-feegeeBC} and $\frac{\tf_{\eps}}{r}, g_{\eps} \in C^2([0,1])$, $\tf_{\eps}^2 + g_{\eps}^2 < 1$, $\tf_{\eps} > 0$, $\tf_{\eps}' > 0$, $g_{\eps}' < 0$ in $(0,1)$; 

\item there are exactly two minimizers of $I_{\eps}$ in $\mcB$ given by $(\tf_{\eps},\pm g_{\eps} )$; 

\item the non-escaping radial profile $(f_\eps,0)$ is an unstable critical point of $I_{\eps}$ in $\mcB$ where $f_\eps$ is the unique radial profile in \eqref{1}. 

\end{enumerate}
\end{theorem}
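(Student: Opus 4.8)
\textbf{Proof proposal for Theorem \ref{Thm:ExtendedExist}.}

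The plan is to study the functional $I_\eps$ on $\mcB$ directly, exploiting the decoupled structure in the $g$-variable. First I would establish existence of a minimizer $(\tf_\eps, g_\eps)$ of $I_\eps$ in $\mcB$ by the direct method: the constraint $|u|\le 1$ (hence $f^2+g^2\le 1$) can be imposed by truncation since $W$ is increasing on $(-\infty,1]$ away from $0$, and the weighted Sobolev norms in the definition of $\mcB$ give coercivity and weak lower semicontinuity; note $N-3<N-1$ so the Hardy-type term controls $r^{(N-3)/2}f$. Since $I_\eps(f,g)=I_\eps(f,|g|)$ and $g\mapsto g$ appears only through $(g')^2$ and $W(1-f^2-g^2)$, one may assume $g\ge 0$, and then $(\tf_\eps,\pm g_\eps)$ are both minimizers. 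Standard elliptic regularity for the ODE system \eqref{Eq:20III21-fee}--\eqref{Eq:20III21-gee} (the equation for $g$ has no singular $1/r^2$ term) gives $\tf_\eps/r, g_\eps\in C^2([0,1])$; the strict inequality $\tf_\eps^2+g_\eps^2<1$ in $[0,1)$ follows from the strong maximum principle applied to $1-\tf_\eps^2-g_\eps^2$ using strict convexity of $W$ (so $W'>0$ on $(0,1]$, $W'(0)=0$), as in \cite{IN}.

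Next I would settle the dichotomy. If $g_\eps\equiv 0$ at the minimizer, then $(\tf_\eps,0)=(f_\eps,0)$ by uniqueness in \eqref{1}, and the non-escaping profile is the global minimizer; if $g_\eps>0$ somewhere, Hopf's lemma plus the $g$-equation (whose potential $\frac1{\eps^2}W'(1-\tf_\eps^2-g_\eps^2)\in L^\infty$) forces $g_\eps>0$ on all of $(0,1)$ — here one checks $g_\eps(0)$: since $N\ge 2$ and the equation for $g$ is regular at $r=0$ with Neumann-type behavior, $g_\eps(0)>0$ as well. The key computation is the \emph{second variation} of $I_\eps$ at $(f_\eps,0)$ in the $g$-direction: plugging $(f_\eps, th)$ and differentiating twice in $t$ gives
\[
\frac{d^2}{dt^2}\Big|_{t=0} I_\eps(f_\eps,th) = \int_0^1\Big[(h')^2 - \tfrac1{\eps^2}W'(1-f_\eps^2)h^2\Big]r^{N-1}\,dr,
\]
which (after the substitution turning the $(N-1)$-weighted ODE into the radial Laplacian on $B^N$) is exactly $\langle L_\eps h, h\rangle$ restricted to radial functions. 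Its sign is governed by $\ell(\eps)$ from \eqref{epsN}: for $\eps<\eps_N$ one has $\ell(\eps)<0$, and the first eigenfunction being radial means this quadratic form is negative for a suitable radial $h$, so $(f_\eps,0)$ is unstable — giving the "only if" direction (an escaping profile must exist, since the minimizer cannot be $(f_\eps,0)$) and assertion 3. For $\eps\ge\eps_N$, $\ell(\eps)\ge 0$, and I would show any critical point with $g\ge 0$, $g\not\equiv 0$ is excluded: testing the $g$-equation against $g/\psi$ (with $\psi>0$ the first eigenfunction of $L_\eps$, as in footnote \ref{foot}) and using $W'(1-\tf_\eps^2-g_\eps^2)\le W'(1-f_\eps^2)$ — wait, the monotonicity goes the wrong way, so instead I would use the convexity of $W$ to compare $I_\eps(\tf_\eps,g_\eps)$ with $I_\eps(f_\eps,0)$ directly via the excess-energy factorization of Theorem \ref{thm:main}, which shows $(f_\eps,0)$ is the unique minimizer when $\ell(\eps)\ge 0$, hence no escaping profile with the minimality it would inherit.

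Finally, for $\eps<\eps_N$ I would prove uniqueness and monotonicity of the escaping profile. Monotonicity $\tf_\eps'>0$, $g_\eps'<0$ in $(0,1)$: for $\tf_\eps$ this follows as for $f_\eps$ by the moving-plane / sliding argument or by noting $\tf_\eps$ solves a vortex-type ODE with a positive, increasing coefficient; for $g_\eps$, since $g_\eps(1)=0$, $g_\eps>0$ in $(0,1)$, $g_\eps(0)>0$, and $-( r^{N-1}g_\eps')' = \frac1{\eps^2}W'(\cdots)g_\eps r^{N-1}\ge 0$, the function $r^{N-1}g_\eps'$ is nonincreasing and must be $\le 0$ throughout (it is $0$ at $r=0$ by regularity), and strict by the strong maximum principle. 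Uniqueness of $(\tf_\eps, g_\eps>0)$ is the step I expect to be the main obstacle: the natural route is a convexity/rearrangement argument in the variable $(f^2+g^2)$, or to argue that two distinct escaping profiles would both be minimizers (by item 2, any escaping critical point that is a minimizer must coincide), so one must rule out escaping critical points that are not minimizers — this I would do by a Sturm-type comparison on the linearized system, showing the Morse index of any escaping critical point is zero, forcing it to be the minimizer, combined with the strict convexity of the reduced functional $(f,s)\mapsto I_\eps(f,\sqrt{s})$ along the one-parameter family connecting two candidates. The monotonicity of $\eps\mapsto\ell(\eps)$ and the strict convexity of $W$ are what make this rigidity work, exactly as in \cite[Theorem 2.4]{IN}, but in the present simpler setting ($\tilde W\equiv 0$) the comparison can be carried out by hand.
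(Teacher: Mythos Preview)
Your overall strategy---direct method for existence, second variation at $(f_\eps,0)$ to detect instability when $\ell(\eps)<0$, and invoking the minimality of $(f_\eps,0)$ when $\ell(\eps)\ge 0$ to exclude escaping profiles---matches the paper's proof closely. The regularity and monotonicity sketch is also in line with the paper, which simply cites \cite{IN} for those facts; your argument for $g_\eps'<0$ via $-(r^{N-1}g_\eps')'\ge 0$ is correct and more explicit than what the paper writes. One small slip: when $\eps\ge\eps_N$ you attribute ``$(f_\eps,0)$ is the unique minimizer'' to Theorem~\ref{thm:main}, but that is the content of (the proof of) Theorem~\ref{eps_large}; Theorem~\ref{thm:main} is the statement that any bounded \emph{escaping} critical point is a global minimizer, and the two are combined to exclude escaping profiles.

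The one place where you take a genuinely harder road is uniqueness of the escaping profile for $\eps<\eps_N$. You propose to rule out non-minimizing escaping critical points by a Sturm/Morse-index argument and then appeal to convexity of a reduced functional. The paper bypasses all of this: by \cite[Lemma 2.7]{IN} every escaping radial profile is automatically bounded ($\tf_\eps^2+g_\eps^2<1$), hence the associated $\tilde u_\eps$ is a bounded escaping critical point of $E_\eps$ in $\mcA$, and Theorem~\ref{thm:main} then forces it to be a global minimizer. Since Theorem~\ref{thm:main} also classifies all minimizers as $R\tilde u_\eps$ with $R$ fixing $\R^N\times\{0\}$, the only radially symmetric ones of the form \eqref{Eq:feegeeH1} are $(\tf_\eps,\pm g_\eps)$, giving uniqueness immediately. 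So the step you flag as ``the main obstacle'' is in fact the cheapest one once Theorem~\ref{thm:main} is in hand; no linearized comparison or Morse-index computation is needed.
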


Recall that for $\eps\geq \eps_N$, the non-escaping radial profile $(f_\eps,0)$ is the unique global minimizer  of $I_{\eps}$ in $\mcB$ (by Theorem \ref{eps_large} whose proof yields the minimality of $(u_\eps, 0_{\R^{M-N}})$ of $E_\eps$ in $\mcA$).

\begin{proof}[Proof of Theorem \ref{Thm:ExtendedExist}]
First, we focus on the existence of escaping radial profiles of \eqref{Eq:feegeeH1}--\eqref{Eq:20III21-feegeeBC}. Note that the direct method in calculus of variations implies that $I_{\eps}$ admits a minimizer $(\tf_{\eps},g_{\eps}) \in \mcB$. Since $(\tf_{\eps}, g_{\eps}) \in \mcB$, $(\tf_{\eps},g_{\eps}) \in C((0,1])$. It follows that $(\tf_{\eps},g_{\eps})$ satisfies \eqref{Eq:20III21-fee}--\eqref{Eq:20III21-feegeeBC} in the weak sense, and so $\tf_{\eps}, g_{\eps} \in C^2((0,1])$. Since $(|\tf_{\eps}|,|g_{\eps}|)$ is also a minimizer of $I_{\eps}$ in $\mcB$, the above argument also shows that $|\tf_{\eps}|, |g_{\eps}| \in C^2((0,1])$ satisfies \eqref{Eq:20III21-fee}--\eqref{Eq:20III21-feegeeBC}. Since $|\tf_{\eps}|,|g_{\eps}| \geq 0$ and $\tf_{\eps}(1) = 1$, the strong maximum principle yields $|\tf_{\eps}| > 0$ in $(0,1)$, and either $|g_{\eps}| > 0$ in $(0,1)$ or $g_{\eps} \equiv 0$ in $(0,1)$. It follows that $\tf_{\eps} > 0$ in $(0,1)$, and there are three alternatives: $g_{\eps} > 0$ in $(0,1)$,  $g_{\eps} < 0$ in $(0,1)$ or $g_{\eps} \equiv 0$ in $(0,1)$. Clearly, when $g_{\eps} \equiv 0$, $\tf_{\eps}$ is equal to the unique radial profile $f_\eps$  in \eqref{1}. By considering $(\tf_{\eps},-g_{\eps})$ instead of $(\tf_{\eps},g_{\eps})$ if necessary, we assume in the sequel that $g_{\eps} \geq 0$.

\medskip
\noindent {\it Claim:} if $0 < \eps < \eps_N$, then $g_{\eps} > 0$ in $(0,1)$ and $(f_\eps,0)$ is an unstable critical point of $I_{\eps}$ in $\mcB$.

\medskip
\noindent {\it Proof of Claim:} We define the second variation of $I_{\eps}$ at $(f_{\eps},0)$ as
\begin{align*}
Q_{\eps}(\alpha,\beta)&=\frac{d^2}{dt^2}\bigg|_{t=0} I_{\eps}\bigg((f_\eps,0)+t(\alpha, \beta)\bigg)\\
	&=  \int_{B^N} \Big[L_\eps \alpha \cdot \alpha + L_\eps \beta \cdot \beta
		+\frac{N-1}{r^2} \alpha^2  + \frac{2}{\eps^2} W''(1 - f_\eps^2) f_\eps^2 \alpha^2\Big]\,dx,
\end{align*}
for $\alpha, \beta\in C_c^\infty((0,1))$ which extends by density to the Hilbert space 
$$\mcH = \{(\alpha,\beta): (f_\eps + \alpha, \beta) \in \mcB\} \, \textrm{ with the norm} \quad \|(\alpha,\beta)\|_{\mcH} := \|(\alpha \frac{x}{|x|}, \beta)\|_{H^1(B^N,\RR^{N+1})}.$$ 
As $\eps\in (0,\eps_N)$, we have $\ell(\eps)<0$ by \eqref{epsN}. Taking $\beta \in H_0^1(B^N)$ to be any first eigenfunction of $L_{\eps}$ in $B^N$, which is radially symmetric, we have $r^{\frac{N-1}{2}}\beta', r^{\frac{N-1}{2}}\beta \in L^2(0,1)$, $\beta(1) = 0$ and 
$$Q_{\eps}(0,\beta)= \int_{B^N} L_\eps \beta \cdot \beta \, dx=\ell(\eps) \int_{B^N} \beta^2 \, dx< 0.$$ So, $(f_\eps,0)$ is an unstable critical point of $I_{\eps}$ in $\mcB$ if $\eps<\eps_N$. In particular, $(f_\eps,0)$ is not minimizing $I_{\eps}$ in $\mcB$ and therefore, by the above construction of the minimizer $(\tf_{\eps}, g_{\eps})$ of $I_{\eps}$ in $\mcB$, we deduce that $g_{\eps} > 0$. This proves the above Claim. 

Moreover, by 
\cite[Lemmas~2.7 and A.5, Proposition 2.9]{IN} (for $\tilde W\equiv 0$), we deduce that $\frac{\tf_{\eps}}{r}, g_{\eps} \in C^2([0,1])$, $\tf_{\eps}^2 + g_{\eps}^2 < 1$, $\tf_{\eps}' > 0$ and $g_{\eps}' < 0$ in $(0,1)$. 

\medskip

To conclude, we distinguish two cases:

\smallskip

\noindent {\it Case 1:} if $\eps\in (0, \eps_N)$, Claim yields the existence of an escaping radial profile $(\tf_{\eps},g_{\eps}>0)$. By \cite[Lemmas~2.7]{IN}, every escaping radial profile $(\tf_{\eps},g_{\eps}>0)$ is bounded (i.e., $\tf_{\eps}^2+g_{\eps}^2<1$ in $(0,1)$) and therefore, by Theorem \ref{thm:main}, the corresponding (bounded) escaping critical point $\tilde u_\eps$ in \eqref{Eq:feegeeH1}  is a global minimizer of $E_\eps$ over $\mcA$ and the set of minimizers of $E_\eps$ over $\mcA$ is then given by $\{R\tilde u_\eps \, :\, R\in O(M), \, Rp=p, \forall p\in \R^N\times \{0_{\R^{M-N}}\}\}$. Therefore, $(\tf_{\eps},\pm g_{\eps} )$ are the only two minimizers of $I_{\eps}$ in $\mcB$. In particular, this proves the uniqueness of the escaping radial profile $(\tf_{\eps},g_{\eps}>0)$. 

\smallskip

\noindent {\it Case 2:} if $\eps\geq \eps_N$, by the proof of Theorem \ref{eps_large}, the non-escaping vortex sheet solution $u_\eps(x)\equiv (f_\eps(|x|)\frac{x}{|x|}, 0_{\R^{M-N}})$ (by \eqref{identif}) is the unique minimizer of $E_\eps$ over $\mcA$. In particular, $(f_\eps, 0)$ is the unique minimizer of $I_{\eps}$ in $\mcB$, i.e., in the above construction of the minimizer $(\tf_{\eps}, g_{\eps})$ of $I_{\eps}$ in $\mcB$, we have $\tilde f_\eps=f_\eps$ and $g_\eps=0$ in $(0,1)$. We claim that no escaping radial profile $(\hat f_{\eps}, \hat g_{\eps}>0)$ exists if  $\eps\geq \eps_N$. Assume by contradiction that such an escaping radial profile $(\hat f_{\eps}, \hat g_{\eps}>0)$ exists. The same argument presented in Case 1 would imply that $(\hat f_{\eps}, \hat g_{\eps}>0)$ is a minimizer of $I_{\eps}$ in $\mcB$ which contradicts the uniqueness of the global minimizer $(f_\eps, 0)$.
\end{proof}

\subsection{Proof of Theorem \ref{thm:dicho}} 

We now prove the main result:

\begin{proof}[Proof of Theorem  \ref{thm:dicho}]
By Theorem \ref{Thm:ExtendedExist}, the existence of an escaping radially symmetric solution $\tilde u_\eps$ in \eqref{Eq:feegeeH1} is equivalent to $\eps\in (0, \eps_N)$. Moreover, in that case, the escaping radial profile $(\tilde f_\eps, g_\eps>0)$ is unique and bounded, i.e., $\tf_{\eps}^2+g_{\eps}^2<1$ in $(0,1)$.

\smallskip

\noindent {\it Case 1:} if $\eps\in (0, \eps_N)$, Theorem \ref{thm:main} implies that the (bounded) escaping radially symmetric critical point $\tilde u_\eps$ in \eqref{Eq:feegeeH1}  is a global minimizer of $E_\eps$ over $\mcA$ and every minimizer of $E_\eps$ over $\mcA$ has the form $R\tilde u_\eps$ for some orthogonal transformation $R\in O(M)$ keeping invariant the space 
$\R^N\times \{0_{\R^{M-N}}\}$. Moreover, by Theorem \ref{Thm:ExtendedExist}, the non-escaping radial profile $(f_\eps,0)$ is proved to be an unstable critical point of $I_{\eps}$ in $\mcB$, so the non-escaping vortex sheet solution $(u_\eps, 0_{\R^{M-N}})$ is an unstable critical point of $E_{\eps}$ in $\mcA$. 

\smallskip

\noindent {\it Case 2:} if $\eps\geq \eps_N$, the proof of Theorem \ref{eps_large} implies that the non-escaping radially symmetric vortex sheet solution $u_\eps(x)\equiv (f_\eps(|x|)\frac{x}{|x|}, 0_{\R^{M-N}})$ (by \eqref{identif}) is the unique minimizer of $E_\eps$ over $\mcA$. In this case, there is no bounded critical point $w_\eps$ of $E_\eps$ over $\mcA$ that escapes in some direction $e\in \mathbb{S}^{M-1}$; indeed, if such (bounded) escaping solution $w_\eps$ satisfying  \eqref{ass:Phi} exists, then Theorem \ref{thm:main} would imply that $w_\eps$ is a global minimizer of $E_\eps$ over $\mcA$ which contradicts that the non-escaping vortex sheet solution $u_\eps$ is the unique global minimizer of $E_\eps$ over $\mcA$.
\end{proof}

Theorem \ref{thm:dicho} holds also for the ``degenerate" dimension $n=0$. In this case, $\Omega=B^N$ and vortex sheets are vortex points, 
$$
E_\eps(u)=\int_{B^N} \Big[\frac{1}{2}|\nabla u|^2+\frac{1}{2\eps^2}W(1 - |u|^2)\Big]\,dx,
$$
$$
\mcA:=\{ u\in H^1(B^N; \R^M):\,  u(x)=(x, 0_{\R^{M-N}}) \textrm{ on } \partial B^N = \bS^{N-1}\}
$$
and radially symmetric vortex critical points of $E_\eps$ in $\mcA$ have the corresponding form in  \eqref{Eq:feegeeH1}:
\be
\label{u-inter}
\tilde u_\eps(x)=(\tf_{\eps}(r) \frac{x}{|x|}, 0_{\R^{M-N-1}}, g_{\eps}(r)) \in \mcA, \quad x\in B^N, r=|x|,
	\ee
where the radial profiles $(\tf_{\eps},g_{\eps})$ satisfy the system \eqref{Eq:20III21-fee}-\eqref{Eq:20III21-feegeeBC}
and are described in Theorem~\ref{Thm:ExtendedExist}; the non-escaping radially symmetric vortex solution is given here by
\be
\label{u11}
u_\eps(x)=(f_\eps(|x|)\frac{x}{|x|}, 0_{\R^{M-N}})  \quad \textrm{ for all } x\in B^N,
\ee
where the radial profile $f_\eps$ is the unique solution to \eqref{1}.
We obtain the following result
which generalizes  \cite[Theorem 1.1]{INSZ_ENS} that was proved in the case $N=2$ and $M=3$ (without identifying the meaning of the dichotomy parameter $\eps_N$ in \eqref{epsN}). 

\begin{theorem}\label{thm:ball}
Let $2 \leq N \leq 6$, $M\geq N+1$, $\Omega=B^N$,  $W \in C^2((-\infty,1])$ satisfy \eqref{Eq:26VI18-E1} and be strictly convex.
Consider $\eps_N \in (0,\infty)$ such that $\ell(\eps_N)=0$ in \eqref{epsN}.
Then there exists an escaping radially symmetric vortex solution $\tilde u_\eps$ in \eqref{u-inter} with the radial profile $(\tf_{\eps},g_{\eps}>0)$ given in Theorem \ref{Thm:ExtendedExist} if and only if $0 < \eps < \eps_N$. Moreover,   

\begin{enumerate}

\item if $0 < \eps < \eps_N$, $\tilde u_\eps$ is a global minimizer of $E_\eps$ in $\mcA$ and all global minimizers of  $E_\eps$ in $\mcA$ are radially symmetric given by $R\tilde u_\eps$ where $R\in O(M)$ is an orthogonal transformation of $\R^M$ satisfying $Rp=p$ for all $p\in \R^N\times \{0_{\R^{M-N}}\}$. In this case, the non-escaping vortex solution $u_\eps$ in \eqref{u11} is an unstable critical point of $E_{\eps}$ in $\mcA$. 

\item if $\eps\geq \eps_N$, the non-escaping vortex solution $u_\eps$ in \eqref{u11} is the unique global minimizer of $E_\eps$ in $\mcA$. Furthermore, there are no bounded critical points $w_\eps$ of $E_{\eps}$ in $\mcA$ that escape in a direction $e\in \bS^{M-1}$, i.e., $w_\eps\cdot e>0$ a.e. in $\Omega$.

\end{enumerate}
\end{theorem}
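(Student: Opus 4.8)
The plan is to rerun the proof of Theorem~\ref{thm:dicho} with $n=0$, checking that each ingredient either carries over verbatim or simplifies. The characterization of escaping radial profiles in Theorem~\ref{Thm:ExtendedExist} is already independent of $n$: it concerns the one-dimensional functional $I_\eps$ on $\mcB$ and the ODE system \eqref{Eq:20III21-fee}--\eqref{Eq:20III21-feegeeBC}. Hence an escaping radially symmetric vortex solution $\tilde u_\eps$ in \eqref{u-inter} with $g_\eps>0$ exists if and only if $\eps\in(0,\eps_N)$, and in that range the escaping radial profile $(\tf_\eps,g_\eps>0)$ is unique and bounded ($\tf_\eps^2+g_\eps^2<1$ in $(0,1)$), while $(f_\eps,0)$ is an unstable critical point of $I_\eps$ in $\mcB$.

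For Case~1 ($0<\eps<\eps_N$), the minimality and the classification of global minimizers follow from the $n=0$ version of Theorem~\ref{thm:main}. I would note that when $n=0$ the domain $\Omega=B^N$ is smooth and the boundary condition in $\mcA$ is a pure Dirichlet condition, so this is precisely the framework of \cite[Theorem~1.3]{INSZ_ENS}; alternatively the proof of Theorem~\ref{thm:main} applies word for word after deleting every $z$-dependence. Indeed, in Step~2 the component $\phi:=w_\eps\cdot e_M$ is $C^1(\overline{B^N})$, solves $-\Delta\phi=\frac1{\eps^2}W'(1-|w_\eps|^2)\phi$ with zero Dirichlet data, and is $>0$ in $B^N$ by the strong maximum principle; the factorization $v_j=\phi\Psi_j$ then gives $G_\eps(v)\ge\int_{B^N}\phi^2|\nabla(v/\phi)|^2\,dx\ge 0$ for all $v\in H^1_0(B^N;\R^M)$, and Step~3 is unchanged. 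Thus $\tilde u_\eps$ is a global minimizer of $E_\eps$ in $\mcA$, every minimizer has the form $R\tilde u_\eps$ with $R\in O(M)$ fixing $\R^N\times\{0_{\R^{M-N}}\}$, and the instability of $(u_\eps,0_{\R^{M-N}})$ in \eqref{u11} follows from that of $(f_\eps,0)$ for $I_\eps$ (item~(3) of Theorem~\ref{Thm:ExtendedExist}), since a destabilizing direction $(\alpha,\beta)\in\mcH$ for $Q_\eps$ yields the perturbation $(\alpha(r)\frac{x}{|x|},0_{\R^{M-N-1}},\beta(r))$ with negative second variation of $E_\eps$.

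For Case~2 ($\eps\ge\eps_N$), I would rerun the proof of Theorem~\ref{eps_large} with $n=0$: Step~1 of Theorem~\ref{GL_equatorNM} gives $E_\eps(u_\eps+v)-E_\eps(u_\eps)\ge\tfrac12\langle L_\eps v,v\rangle\ge\tfrac{\ell(\eps)}2\|v\|_{L^2(B^N)}^2\ge 0$ for every $v\in H^1_0(B^N;\R^M)$, since $\ell(\eps)\ge 0$ there, so $u_\eps$ in \eqref{u11} is a global minimizer. Uniqueness follows exactly as in Theorem~\ref{eps_large}: if $\eps>\eps_N$, then $\ell(\eps)>0$ forces $v=0$; if $\eps=\eps_N$, strict convexity of $W$ together with the constraint $|u_\eps+v|=|u_\eps|$ and the Dirichlet boundary condition force the eigenfunction coefficient $\lambda\in\R^M$ to vanish. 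Finally, there is no bounded escaping critical point $w_\eps$ with $w_\eps\cdot e>0$ a.e.: by the $n=0$ version of Theorem~\ref{thm:main} it would be a global minimizer of $E_\eps$ in $\mcA$, contradicting the uniqueness of the non-escaping $u_\eps$.

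The only point requiring genuine (if minor) care is confirming that the minimality argument of Theorem~\ref{thm:main} is legitimate for $n=0$, since that theorem was stated under the hypothesis $n\ge 1$; but every difficulty there — the non-smooth domain, the mixed Dirichlet–Neumann boundary, the extra term $\int|\nabla_z v|^2$, and the Hopf-lemma argument on $B^N\times\partial(0,1)^n$ — simply disappears, leaving the cleaner Dirichlet problem on the ball, so no new obstacle arises.
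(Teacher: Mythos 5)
Your proposal is correct and follows essentially the same route as the paper: the authors likewise reduce Theorem~\ref{thm:ball} to the argument of Theorem~\ref{thm:dicho}, noting that the only change for $\Omega=B^N$ is that critical points satisfy a pure Dirichlet problem instead of the mixed Dirichlet--Neumann system \eqref{E-L-M}. Your additional verification that Theorem~\ref{Thm:ExtendedExist} is $n$-independent and that the proofs of Theorems~\ref{thm:main} and \ref{eps_large} simplify (rather than break) when $n=0$ is exactly the check the paper implicitly relies on.
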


The proof follows by the same argument used for Theorem  \ref{thm:dicho}, the main difference is that in the ball $\Omega=B^N$, a critical point $w_\eps$ of $E_{\eps}$ in $\mcA$ satisfies the PDE system with Dirichlet boundary condition (instead of the mixed Dirichlet-Neumann condition in \eqref{E-L-M}):
\begin{align*}
-\Delta w_\eps&=\frac1{\eps^2} w_\eps \, W'(1-|w_\eps|^2)\quad \textrm{ in } \, B^N,\\
w_\eps(x)&=(x, 0_{\R^{M-N}}) \quad \textrm{ on } \, \partial B^N.
\end{align*}

\appendix
\section{Appendix. Vortex sheet $\bS^{M-1}$-valued harmonic maps in cylinders}

In dimensions $M> N\geq 2$ and $n\geq 1$, for the cylinder shape domain $\Omega=B^N\times (0,1)^n$, we consider the harmonic map problem for $\bS^{M-1}$-valued maps $u\in H^1(\Omega; \bS^{M-1})\cap \mcA$ associated to the Dirichlet energy
$$E(u)=\frac12\int_{\Omega} |\nabla u|^2\, dx dz.$$ Any critical point $u:\Omega \to \bS^{M-1}$ of this problem satisfies
\be
\label{HMP}
\left\{\begin{array}{l}
-\Delta u=u \, |\nabla u|^2\quad \textrm{ in } \, \Omega,\\
\frac{\partial u}{\partial z}=0 \quad \textrm{ on } \, B^N\times \partial (0,1)^n,\\
u(x,z)=(x, 0_{\R^{M-N}}) \quad \textrm{ on } \, \partial B^N\times (0,1)^n.
\end{array}
\right.
\ee
We will focus on radially symmetric vortex sheet $\bS^{M-1}$-valued harmonic maps having the following form (invariant in $z$-direction): 
\begin{equation}
u(x,z)=(f(r) \frac{x}{|x|}, 0_{\R^{M-N-1}}, g(r)) \in \mcA, \quad  x\in B^N, z\in (0,1)^n, r=|x|,
	\label{Eq:hmp}
\end{equation}
where the radial profile $(f,g)$ satisfies \be
\label{f2g2}
f^2+g^2=1 \quad \textrm{in} \quad (0,1),
\ee
and the system of ODEs:
\begin{align}
-f'' - \frac{N-1}{r}  f' + \frac{N-1}{r^2} f
	&=\Gamma(r) f  \quad \textrm{in} \quad (0,1),
	\label{Eq:MM-fee}\\
-g'' - \frac{N-1}{r} g' 
	&= \Gamma(r) g  \quad \textrm{in} \quad (0,1),
	\label{Eq:MM-gee}\\
f(1) &= 1 \text{ and } g(1) = 0,
	\label{Eq:MM-feegeeBC}
\end{align}
where 
$$
\Gamma(r)=(f')^2+\frac{N-1}{r^2}f^2+(g')^2$$ is the Lagrange multiplier due to the unit length constraint in \eqref{f2g2}. As for the Ginzburg-Landau system, we distinguish two type of radial profiles:

\smallskip

$\bullet$ the {\it non-escaping} radial profile $(\bar f\equiv 1, \bar g\equiv 0)$ yielding the {\it non-escaping} (radially symmetric) vortex sheet $\bS^{M-1}$-valued harmonic map (also called ``equator" map):
\be
\label{ecuato}
\bar u(x,z)=(\frac{x}{|x|}, 0_{\R^{M-N}})   \quad  x\in B^N, z\in (0,1)^n.
\ee
Note that $\bar u$ is singular and the singular set of this map is the vortex sheet $\{0_{\R^{M-N}}\}\times (0,1)^n$ of dimension $n$ in $\Omega$. Also, observe that $\bar u\in H^1(\Omega, \bS^{M-1})$ if and only if $N\geq 3$. 
\smallskip

$\bullet$ the {\it escaping} radial profile $(f, g)$ with $g>0$ in $(0,1)$; in this case, it holds $f(0)=0$, $g(0)=1$ and we say that  $u$ in \eqref{Eq:hmp} is an {\it escaping} (radially symmetric) vortex sheet $\bS^{M-1}$-valued harmonic map. Note that $u$ is smooth for every dimension $M> N\geq 2$ and $n\geq 1$ and the zero set of $(u_1, \dots, u_N)$ is the vortex sheet $\{0_{\R^{M-N}}\}\times (0,1)^n$ of dimension $n$ in $\Omega$. Obviously, $(f, -g<0)$ is another radial profile satisfying \eqref{f2g2}-\eqref{Eq:MM-feegeeBC}.

\smallskip

The properties of such radial profiles are proved in \cite{JagKaul} (see also \cite[Theorem 2.6]{IN} for $\tilde W\equiv 0$ in those notations). More precisely,  
\begin{enumerate}[(a)]
\item If $N \geq 7$, the non-escaping radial profile $(\bar f\equiv 1, \bar g\equiv 0)$ is the unique minimizer of 
\begin{align*}
I(f,g)
	&= \frac{1}{|\Sphere^{N-1}|} E\bigg((f(r) \frac{x}{|x|}, 0_{\R^{M-N-1}}, g(r))\bigg)=
	\frac12\int_0^1 \Big[(f')^2 + (g')^2 + \frac{N-1}{r^2} f^2 \Big]\,r^{N-1}\,dr,
\end{align*}
where $(f,g)$ belongs to
$\mcB\cap \big\{(f,g)\, :\,  f^2 + g^2 = 1\big\}$ with $\mcB$ defined in \eqref{def_b}. Moreover, the system \eqref{f2g2}--\eqref{Eq:MM-feegeeBC} has no escaping radial profile $(f, g)$ with $g > 0$ in $(0,1)$.

\item If $2 \leq N \leq 6$, then there exists a unique escaping radial profile $(f, g)$ with $g > 0$ satisfying \eqref{f2g2}--\eqref{Eq:MM-feegeeBC}.  Moreover, $(f, \pm g)$ are the only two global minimizers of  $I$ in $\mcB\cap \big\{(f,g)\, :\,  f^2 + g^2 = 1\big\}$, $\frac{f}{r}, g \in C^\infty([0,1])$, $f(0)=0$, $g(0)=1$, $f > 0$, $f' > 0$ and $g' < 0$ in $(0,1)$. In addition, for $3 \leq N \leq 6$, the non-escaping solution $(\bar f\equiv1, \bar g\equiv 0)$ is an unstable critical point of $I$ in $\mcB\cap \big\{(f,g)\, :\,  f^2 + g^2 = 1\big\}$.\footnote{For $N = 2$, $(1,0)\notin \mcB$; however, we can define the second variation of $I$ at $(1,0)$ along directions $(0,q)$ compactly supported in $(0,1)$:
\[
Q(0,q) = \int_0^1 \Big[(q')^2 - \frac{N-1}{r^2}q^2 \Big]\,r^{N-1}\,dr,
\]
and one can prove the existence of $q \in Lip_c(0,1)$ such that
$
Q(0,q) < 0$ (see e.g. \cite[Remark 2.16]{IN}).} 

\end{enumerate} 

There is a large number of articles studying existence, uniqueness, regularity and stability of radially symmetric  $\mathbb{S}^{M-1}$-valued harmonic maps (e.g., \cite{JagKaul1, JagKaul, SU, SU1, SS1, Lin-Wang, INSZ_ENS}). We summarize here the main result for our problem in the cylinder shape domain $\Omega=B^N\times (0,1)^n$: if $N\leq 6$, then minimizing $\mathbb{S}^{M-1}$-valued harmonic maps in $\mcA$ are smooth, radially symmetric and escaping in one-direction; if $N\geq 7$, then there is a unique minimizing $\mathbb{S}^{M-1}$-valued harmonic map in $\mcA$ which is singular and given by the equator map $\bar u$ in \eqref{ecuato}. \footnote{We mention the paper of Bethuel-Brezis-Coleman-H\'elein \cite{BBCH} about a similar phenomenology in a domain $\Omega=(B^2\setminus B_\rho)\times (0,1)\subset \RR^3$ where $B_\rho\subset \RR^2$ is the disk centered at $0$ of radius $\rho$. }

\begin{theorem}\label{thm:dico-HMP}
Let $n\geq 1$, $N\geq 2$, $M\geq N+1$ and $\Omega=B^N\times (0,1)^n$. Then

\begin{enumerate}

\item if $2\leq N\leq 6$, then the {\it escaping} radially symmetric vortex sheet solution $u$ in \eqref{Eq:hmp} with $g>0$ is a minimizing $\bS^{M-1}$-valued harmonic map in $\mcA$ and all minimizing $\bS^{M-1}$-valued harmonic maps in $\mcA$  are smooth radially symmetric given by $R u$ where $R\in O(M)$ satisfies $Rp=p$ for all $p\in \R^N\times \{0_{\R^{M-N}}\}$. In this case, the equator map $\bar u$ in \eqref{ecuato} is an unstable $\bS^{M-1}$-valued harmonic map in $\mcA$. 

\item if $N\geq 7$, the non-escaping vortex sheet solution $\bar u$ in \eqref{ecuato} is the unique minimizing $\bS^{M-1}$-valued harmonic map in $\mcA$. Moreover, there is no  $\bS^{M-1}$-valued harmonic map $w$ in $\mcA$ escaping in a direction $e\in \bS^{M-1}$, i.e., $w\cdot e>0$ a.e. in $\Omega$.

\end{enumerate}

\end{theorem}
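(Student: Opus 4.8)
\textbf{Proof proposal for Theorem \ref{thm:dico-HMP}.}

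The plan is to transfer the Ginzburg-Landau dichotomy to the harmonic map setting by sending $\eps\to 0$, using the radial profile analysis already available in the Appendix (items (a) and (b) above, taken from \cite{JagKaul, IN}). First I would record that, by standard results on minimizing $\bS^{M-1}$-valued harmonic maps with boundary data (Schoen-Uhlenbeck regularity \cite{SU, SU1}, Lin-Wang \cite{Lin-Wang}), a minimizer $u$ of $E$ in $H^1(\Omega;\bS^{M-1})\cap\mcA$ exists; since the domain is a product $B^N\times(0,1)^n$ and the boundary data is $z$-independent, the same averaging argument as in Remark \ref{import-rem} shows that a minimizer is $z$-invariant, so it suffices to work on $B^N$ (the case $n=0$), i.e.\ to treat minimizing $\bS^{M-1}$-valued harmonic maps $B^N\to\bS^{M-1}$ with boundary datum $x\mapsto(x,0_{\R^{M-N}})$. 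The key structural fact I would then invoke is that, by dimension reduction / the classification of minimizing tangent maps, a minimizing harmonic map with this symmetric boundary datum is itself radially symmetric of the form \eqref{Eq:hmp}; this is where one cites Lin \cite{Lin}, Brezis-Coron-Lieb \cite{BrezisCoronLieb} for the non-escaping case and the corresponding symmetrization argument (as in \cite{INSZ_ENS}) allowing the reduction of the target to $\R^N\times\{0\}\times\R$ via the map $\tilde w$ in \eqref{transf_n} adapted to the sphere-valued setting.

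For part (1), $2\le N\le 6$: once radial symmetry is granted, the minimizer corresponds to a minimizer of $I$ over $\mcB\cap\{f^2+g^2=1\}$, and item (b) of the Appendix says the only two such minimizers are $(f,\pm g)$ with $g>0$, that $\tfrac{f}{r},g\in C^\infty([0,1])$ with $f(0)=0$, $g(0)=1$, $f>0$, $f'>0$, $g'<0$ in $(0,1)$, and (for $3\le N\le 6$) that $(\bar f\equiv1,\bar g\equiv 0)$ is unstable; for $N=2$ instability follows from the footnoted computation $Q(0,q)<0$. To pass from ``radial minimizer'' to ``all minimizers are $Ru$'', I would argue exactly as in Step 3 of the proof of Theorem \ref{thm:main}: an arbitrary minimizer, being radial of the form \eqref{Eq:hmp}, has its radial profile equal to one of the two minimizers $(f,\pm g)$ of $I$, and $(f,-g)=R_0 u$ for the reflection $R_0$ in the last coordinate, which fixes $\R^N\times\{0_{\R^{M-N}}\}$; conversely any such $R\in O(M)$ preserves $E$ and the boundary condition, so $Ru$ is again minimizing. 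The smoothness of $u$ is immediate from $\tfrac fr, g\in C^\infty([0,1])$ and the explicit formula \eqref{Eq:hmp}. Finally, $\bar u$ being unstable (hence not minimizing) follows from the second-variation computation already quoted.

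For part (2), $N\ge 7$: item (a) of the Appendix states that $(\bar f\equiv 1,\bar g\equiv0)$ is the \emph{unique} minimizer of $I$ over $\mcB\cap\{f^2+g^2=1\}$ and that no escaping radial profile with $g>0$ exists. Combined with the radial-symmetry reduction, this gives that $\bar u$ in \eqref{ecuato} is the unique minimizing $\bS^{M-1}$-valued harmonic map in $\mcA$. For the non-existence of \emph{any} escaping harmonic map (not a priori a minimizer): if $w\in\mcA$ were a harmonic map with $w\cdot e>0$ a.e., then $\phi:=w\cdot e$ would satisfy $-\Delta\phi=|\nabla w|^2\phi\ge 0$ in $\Omega$ with $\phi=0$ on $\partial B^N\times(0,1)^n$ and $\partial_z\phi=0$ on $B^N\times\partial(0,1)^n$, so $\phi>0$ in $\Omega$ by the strong maximum principle and Hopf's lemma; the positivity of this ``Jacobi-type'' potential lets one run the factorization argument of Step 2 of Theorem \ref{thm:main} to conclude $w$ is a minimizer, contradicting uniqueness of $\bar u$. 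The main obstacle is the radial-symmetry reduction of minimizers in $B^N$: unlike in the Ginzburg-Landau case where one argues via the ODE profile after assuming an $SO(N)$-symmetry, here one must genuinely invoke the classification of minimizing tangent maps (or a direct comparison/symmetrization argument à la \cite{BrezisCoronLieb, Lin, INSZ_ENS}) to force a general minimizer into the form \eqref{Eq:hmp}; everything else is a routine transcription of the arguments already developed in Sections \ref{sec-2} and \ref{sec-3}.
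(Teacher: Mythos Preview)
Your proposal has a genuine gap, and you have correctly identified where it lies: the ``radial-symmetry reduction'' of an arbitrary minimizer in $B^N$ to the form \eqref{Eq:hmp}. This is not a result one can simply cite from \cite{BrezisCoronLieb, Lin, Lin-Wang, INSZ_ENS}; those papers treat either the specific case $M=N$ (where the target has no room to escape), or prove symmetry only \emph{after} having already established minimality of a symmetric competitor by other means. A direct symmetrization argument forcing a general $\bS^{M-1}$-valued minimizer into the radial form \eqref{Eq:hmp} is not available in the literature and would be a substantial result in its own right. Since both parts of your argument (uniqueness of $\bar u$ for $N\geq 7$, and classification of minimizers for $2\leq N\leq 6$) rest on this reduction, the proof as written is incomplete.

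The paper's proof avoids this obstacle entirely, and the mechanism is worth noting because you already have the ingredients in hand. For part~1, the escaping radial solution $u$ of \eqref{Eq:hmp} satisfies $u\cdot e_M=g>0$ in $\Omega$; feeding $u$ directly into Theorem~\ref{prop:HMP} (the Sandier--Shafrir type result, which is the harmonic-map analogue of Theorem~\ref{thm:main}) yields in one stroke both that $u$ is minimizing \emph{and} that every minimizer is of the form $Ru$. No a priori symmetry of minimizers is needed: Theorem~\ref{prop:HMP} classifies all minimizers once a single escaping critical point is exhibited. For part~2, the paper proves uniqueness of $\bar u$ by a direct comparison: for any $v\in H^1_0$ with $|\bar u+v|=1$, the excess energy equals $\int_\Omega(|\nabla v|^2-\frac{N-1}{|x|^2}|v|^2)$, and the Hardy inequality with constant $\frac{(N-2)^2}{4}\geq N-1$ (valid precisely for $N\geq 7$) makes this nonnegative, with equality only at $v=0$. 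Your argument for the non-existence of escaping harmonic maps in part~2 is essentially correct and matches the paper (it is again Theorem~\ref{prop:HMP} combined with the uniqueness just proved), but your route to the uniqueness of $\bar u$ itself goes through the unproven symmetry reduction rather than the direct Hardy estimate.
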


The main ingredient is the following result yielding minimality of escaping ${\mathbb S}^{M-1}$-valued harmonic maps. This  is reminiscent from Sandier-Shafrir \cite{SS1} (see also \cite[Theorem 1.5]{INSZ_ENS}). 

\begin{theorem}\label{prop:HMP}
Let $n\geq 1$, $M> N \geq 2$ and $\Omega=B^N\times (0,1)^n$.
Assume that 
$w \in \mcA \cap  H^1(\Omega, {\mathbb S}^{M-1})$ is a ${\mathbb S}^{M-1}$-valued harmonic map satisfying \eqref{HMP} and
\be\label{ass:Phi+}
w \cdot e>0\textrm{ a.e. in }\Omega
\ee 
in an escaping direction $e\in \bS^{M-1}$.
Then $w$ is a minimizing ${\mathbb S}^{M-1}$-valued harmonic map in $\mcA$ and all minimizing ${\mathbb S}^{M-1}$-valued harmonic maps in $\mcA$ are of the form $Rw$ where $R\in O(M)$ is an orthogonal transformation of $\R^M$ satisfying $Rp=p$ for all $p\in \R^N\times \{0_{\R^{M-N}}\}$.
\end{theorem}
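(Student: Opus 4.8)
The plan is to mimic the structure of the proof of Theorem~\ref{thm:main}, adapting it to the harmonic map setting where the nonlinearity is the Lagrange multiplier $|\nabla u|^2$ rather than $\frac{1}{\eps^2}W'(1-|u|^2)$. As in Theorem~\ref{thm:main}, by the $O(M)$-invariance of the Dirichlet energy and of the vortex boundary condition under transformations fixing $\R^N\times\{0_{\R^{M-N}}\}$, I may assume after a rotation that the escaping direction is $e=e_M=(0,\dots,0,1)$. By \eqref{HMP} and elliptic regularity for the mixed Dirichlet-Neumann problem, $w\in C^1(\bar\Omega;\bS^{M-1})$ (indeed $w$ is smooth in $\Omega$ where $|\nabla w|^2$ is smooth), so $w\cdot e_M\geq 0$ on $\bar\Omega$ and $e_M$ is orthogonal to $\R^N\times\{0_{\R^{M-N}}\}$ by the boundary condition.

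\textbf{Step 1: Excess energy and the key quadratic form.} For a competitor $u=w+v\in\mcA\cap H^1(\Omega,\bS^{M-1})$, write $v\in H^1_0(B^N\times\R^n;\R^M)$ (after extension). Since $|u|^2=|w|^2=1$, we have $2w\cdot v+|v|^2=0$ a.e.\ in $\Omega$, hence $\int_\Omega \nabla w\cdot\nabla v\,dX = \int_\Omega w\cdot v\,|\nabla w|^2\,dX = -\tfrac12\int_\Omega |v|^2|\nabla w|^2\,dX$ using \eqref{HMP}. Therefore
\begin{align*}
E(w+v)-E(w) &= \int_\Omega\Big[\nabla w\cdot\nabla v+\tfrac12|\nabla v|^2\Big]\,dX = \int_\Omega\Big[\tfrac12|\nabla v|^2-\tfrac12|\nabla w|^2|v|^2\Big]\,dX =: \tfrac12\,G(v),
\end{align*}
exactly the harmonic-map analogue of \eqref{rel:endif4} with $W'(1-|w|^2)$ replaced by $|\nabla w|^2$. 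Note that here the inequality in Step~1 of Theorem~\ref{thm:main} is in fact an equality, because the unit-length constraint replaces the convexity estimate \eqref{111}; this also means the ``strict convexity'' discussion is automatic.

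\textbf{Step 2: Positivity of $G$ by the factorization argument.} Let $\phi:=w\cdot e_M = w_M\in C^1(\bar\Omega)$, which satisfies $\phi\geq 0$ in $\Omega$ and the linearized equation $-\Delta\phi = |\nabla w|^2\phi$ in $\Omega$, $\phi=0$ on $\partial B^N\times(0,1)^n$, $\frac{\partial\phi}{\partial z}=0$ on $B^N\times\partial(0,1)^n$ (obtained by taking the $M$-th component of \eqref{HMP}). By \eqref{ass:Phi+}, $\phi\not\equiv 0$, so by the strong maximum principle $\phi>0$ in $\Omega$ and by Hopf's lemma $\phi>0$ on $B^N\times\partial(0,1)^n$. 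For $v\in C_c^\infty(B^N\times\R^n;\R^M)$ set $\Psi=v/\phi\in C^1(\bar\Omega;\R^M)$ vanishing near $\partial B^N\times(0,1)^n$; integrating by parts componentwise as in \cite[Lemma~A.1]{INSZ3} gives $G(v_j)=\int_\Omega\phi^2|\nabla\Psi_j|^2\,dX$ for each $j$, hence $G(v)=\int_\Omega\phi^2|\nabla(v/\phi)|^2\,dX\geq 0$. Since $|\nabla w|^2\in L^\infty_{loc}$ but may blow up near the vortex sheet, I need to check $G$ is well-behaved: either $|\nabla w|^2\in L^\infty(\Omega)$ (true in the escaping case by (b), since $f/r,g\in C^\infty([0,1])$), or one truncates; in the escaping regime $w$ is smooth on $\bar\Omega$ so $G$ is continuous in the strong $H^1(\Omega)$ topology and a density argument extends $G(v)\geq\int_\Omega\phi^2|\nabla(v/\phi)|^2\,dX\geq 0$ to all $v\in H^1_0(B^N\times\R^n;\R^M)$. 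Combined with Step~1, $w$ is a minimizing $\bS^{M-1}$-valued harmonic map in $\mcA$, and $G(v)=0$ forces $v/\phi$ to be constant, i.e.\ $v=\lambda\phi$ for some $\lambda\in\R^M$.

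\textbf{Step 3: Classification of minimizers.} Let $U$ be another minimizing $\bS^{M-1}$-valued harmonic map in $\mcA$ and $v:=U-w$. By Steps~1--2, $G(v)=0$, so $v=\lambda\phi$, and $|U|=|w|=1$ gives $2w\cdot v+|v|^2=0$ everywhere, i.e.\ $2\lambda\cdot w+\phi|\lambda|^2=0$ in $\Omega$ and by continuity on $\partial\Omega$. As in Theorem~\ref{thm:main}, one first uses the transformation \eqref{transf_n} (which preserves $\mcB\cap\{f^2+g^2=1\}$ and the energy, hence produces another minimizer) to reduce to the case $w_{N+1}=\dots=w_{M-1}\equiv 0$, $w_M=\phi>0$ in $\Omega$; evaluating $2\lambda\cdot w+\phi|\lambda|^2=0$ on $\partial B^N\times(0,1)^n$ where $\phi=0$ and $w=(x,0_{\R^{M-N}})$ forces $\lambda_1=\dots=\lambda_N=0$, and then the identity reduces to $\lambda_{N+1}^2+\dots+\lambda_{M-1}^2+(\lambda_M+1)^2=1$, so there is $R\in O(M)$ fixing $\R^N\times\{0_{\R^{M-N}}\}$ with $Re_M=(0,\dots,0,\lambda_{N+1},\dots,\lambda_{M-1},\lambda_M+1)$, whence $U=w+\lambda\phi=Rw$. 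The converse (each such $Rw$ is minimizing) is immediate from the invariance of $E$ and the boundary condition.

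\textbf{Main obstacle.} The delicate point is the regularity/integrability of $|\nabla w|^2$ near the vortex sheet $\{0\}\times(0,1)^n$ and the justification of the integration by parts and the density argument in Step~2 there — in the Ginzburg–Landau case $W'(1-|w_\eps|^2)$ was automatically bounded, whereas here $|\nabla w|^2$ is a priori only in $L^1$. This is resolved by invoking the known regularity of escaping radial profiles from \cite{JagKaul} / item~(b) above ($f/r,g\in C^\infty([0,1])$), which makes the escaping $w$ in \eqref{Eq:hmp} smooth on all of $\bar\Omega$ with bounded $|\nabla w|^2$; the remaining subtlety is that $\phi=w_M=g(r)$ vanishes at $r=0$ in the escaping case? — no, in fact $g(0)=1>0$ there, so $\phi>0$ even on the vortex sheet and the division by $\phi$ is harmless in $\bar\Omega$. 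Thus the only genuine care needed is the mixed boundary condition (handled by Hopf's lemma on $B^N\times\partial(0,1)^n$ exactly as in Theorem~\ref{thm:main}).
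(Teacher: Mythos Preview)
Your overall architecture (excess energy identity, factorization through $\phi=w\cdot e_M$, classification) matches the paper's, but there is a genuine gap in your Step~2 and in your ``Main obstacle'' paragraph. You justify $\phi>0$ and the density/continuity argument by claiming $w\in C^1(\bar\Omega)$ via elliptic regularity, and then fall back on the smoothness of the \emph{radial} escaping profile from item~(b). But Theorem~\ref{prop:HMP} is stated for an \emph{arbitrary} $\bS^{M-1}$-valued harmonic map $w$ satisfying \eqref{HMP} and \eqref{ass:Phi+}; there is no assumption that $w$ has the form \eqref{Eq:hmp}. A priori $w$ is only in $H^1$, so the right-hand side $w|\nabla w|^2$ of \eqref{HMP} is merely $L^1$ and you cannot bootstrap to $C^1$; likewise the strong maximum principle and Hopf's lemma for $\phi$, and the claim that $G$ is strongly $H^1$-continuous (which needs $|\nabla w|^2\in L^\infty$), are unjustified in this generality.

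The paper deliberately avoids any regularity input (it says so explicitly). Instead of the strong maximum principle, it observes that $\phi\in H^1(\Omega)$ is weakly superharmonic (since $-\Delta\phi=|\nabla w|^2\phi\geq 0$), extends $\phi$ by even reflection across the Neumann faces $B^N\times\partial(0,1)^n$ to a superharmonic function on $B^N\times(-1,2)^n$, and applies the \emph{weak Harnack inequality} \cite[Theorem~8.18]{GT} to get $\phi\geq\delta>0$ on any compact $\omega\cap\Omega$. This is enough to write $\tilde v=\phi\Psi$ with $\Psi\in H^1\cap L^\infty$ for $\tilde v\in C_c^\infty(B^N\times\R^n)$ and run the factorization. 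For the passage to general $v$, rather than $H^1$-continuity of $Q$, the paper approximates $v\in H^1_0\cap L^\infty$ by $\tilde v^k\in C_c^\infty$ with a \emph{uniform} $L^\infty$ bound, so that $|\nabla w|^2|\tilde v^k|^2$ is dominated by $C|\nabla w|^2\in L^1$ and dominated convergence gives $Q(\tilde v^k)\to Q(v)$; Fatou then yields $Q(v)\geq\int_{\omega\cap\Omega}\phi^2|\nabla(v/\phi)|^2\,dX$ for every compact $\omega$. Your Step~3 is fine and coincides with the paper's.
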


\begin{proof}[Proof of Theorem \ref{prop:HMP}] We give here a simple proof based on the argument in \cite{INSZ_ENS} that avoids the regularity results used in \cite{SS1}. By the $H^{1/2}$-trace theorem applied for $w\in H^1(\Omega, \bS^{M-1})$, \eqref{ass:Phi+} implies that $w \cdot e\geq 0$ on $\partial B^N\times (0,1)^n$. Combined with the vortex boundary condition in \eqref{HMP}, we deduce that the escaping direction $e$ has to be orthogonal to $\R^N\times \{0_{\R^{M-N}}\}$ and up to a rotation, we can assume that $e=e_M$ (as in \eqref{def-enu}). Then $\phi=w \cdot e_M>0$ a.e. in $\Omega$ satisfies 
\be
\label{har}
-\Delta \phi=|\nabla w|^2\phi \ \hbox{  in } \ \Omega, \, \, \frac{\partial \phi}{\partial z}=0 \, \textrm{ on } \, B^N\times \partial (0,1)^n,
 \, \, \phi=0 \, \textrm{ on } \, \partial B^N\times (0,1)^n.
\ee
We consider configurations\footnote{Note that for any $\tilde w \in \mcA \cap  H^1(\Omega, {\mathbb S}^{M-1})$, the map $\tilde w-w$ has an extension in $H^1_0(B^N\times \R^n,\R^M)$.} $\tilde w=w+v:\Omega\to \bS^{M-1}$ with $v\in H^1_0(B^N\times \R^n,\R^M)$ (in particular, $|v|\leq 2$ in $\Omega$). Then 
\be\label{const:uv}
 2w\cdot v+|v|^2=0 \quad \textrm{ a.e. in $\Omega$. }
\ee  Using \eqref{HMP} and \eqref{const:uv}, we obtain
$$
2\int_\Omega \nabla w\cdot \nabla v=2\int_\Omega |\nabla w|^2 w\cdot v\,dx=-\int_\Omega |\nabla w|^2 |v|^2\,dx,
$$
yielding\footnote{Note that the functional $Q$ represents the second variation of $E$ at $w$, but here the map $v$ is not necessarily orthogonal to $w$.}
\be
\label{expd}
\int_\Omega |\nabla (w+v)|^2\,dx-\int_\Omega |\nabla w|^2\,dx =\int_\Omega|\nabla v|^2- |\nabla w|^2|v|^2\,dx=:Q(v).
\ee
To show that $w$ is minimizing, we prove that $Q(v)\ge 0$ for all $v\in H^1_0(B^N\times \R^n,\R^M)\cap L^\infty(\Omega;\RR^M)$ (note that this is a class larger than what we need, as we do not require that $v$ satisfy the pointwise constraint \eqref{const:uv}).
For that, we take an arbitrary map $\tilde v\in C_c^\infty(B^N\times \R^n,\R^M)$ of support $\omega$ and decompose it as  $\tilde v = \phi \Psi$ in $\Omega$. This decomposition makes sense as $\phi\geq \delta>0$ in $\omega\cap \Omega$ for some $\delta>0$ (which may depend on $\omega$). Indeed, 
by \eqref{ass:Phi+} and \eqref{har}, $\phi$ is a superharmonic function (i.e., $-\Delta \phi\geq 0$ in $\Omega$) that belongs to $H^1(\Omega)$. As $\frac{\partial \phi}{\partial z}=0$ on $B^N\times \partial (0,1)^n$, $\phi$ can be extended by even mirror symmetry to the domain $\tilde \Omega=B^N\times (-1, 2)^n$ so that $\phi$ is superharmonic in $\tilde \Omega$. Thus, the weak Harnack inequality (see e.g. \cite[Theorem 8.18]{GT}) implies that on the compact set $\omega\cap \Omega$ in $\tilde \Omega$, we have $\phi\geq \delta>0$ for some $\delta$. So,  $\tilde v = \phi \Psi$ in $\Omega$ with
$\Psi=(\Psi_1, \dots, \Psi_M)\in H^1\cap L^\infty(\Omega;\RR^M)$ vanishing in a neighborhood of $\partial B^N\times (0,1)^n$. Then  integration by parts yields for $1 \leq j \leq M$:
\begin{align*}
Q(\tilde v_j)&=\int_\Omega |\nabla \tilde v_j|^2- |\nabla w|^2  \phi \cdot \phi \Psi_j^2\,dx\\
	&\stackrel{\eqref{har}}{=} \int_\Omega  |\nabla (\phi \Psi_j)|^2-\nabla \phi \cdot \nabla (\phi\,\Psi_j^2) \,dx
	= \int_\Omega \phi^2 |\nabla \Psi_j|^2\,dx \ge 0 
	\end{align*}
for all $\tilde v\in C_c^\infty(B^N\times \R^n,\R^M)$. 
Then for every $v\in H^1_0(B^N\times \R^n,\R^M)\cap L^\infty(\Omega;\RR^M)$, there exists a sequence $\tilde v^k\in C_c^\infty(B^N\times \R^n,\R^M)$  such that $\tilde v^k\to v$ and $\nabla \tilde v^k\to \nabla v$ in $L^2$ and a.e. in $B^N\times \R^n$ and $|\tilde v^k|\leq \|v\|_{L^\infty(\Omega)}+1$ in $\Omega$ for every $k$. In particular, by dominated convergence theorem, we have 
$Q(\tilde v^k)\to Q(v)$ 
thanks to \eqref{expd}. Thus, we deduce that for every compact $\omega\subset \tilde \Omega=B^N\times (-1, 2)^n$,
$$Q(v)=\lim_{k\to \infty} Q(\tilde v^k)\geq \liminf_{k\to \infty} \int_{\omega \cap \Omega} \phi^2 |\nabla \big(\frac{\tilde v^k}\phi\big)|^2\,dx\geq \int_{\omega \cap \Omega} \phi^2 |\nabla \big(\frac{v}\phi\big)|^2\,dx\geq 0,$$ where we used Fatou's lemma.
In particular, $w$ is a minimizing ${\mathbb S}^{M-1}$-valued harmonic map by \eqref{expd} and $Q(v)=0$ yields the existence of a vector $\lambda\in \R^M$ such that $v=\lambda \phi$ a.e. in $\Omega$. Then the classification of the  minimizing ${\mathbb S}^{M-1}$-valued harmonic maps follows by \eqref{const:uv} as in the Step 3 of the proof of Theorem~\ref{thm:main}. \end{proof}

\begin{proof}[Proof of Theorem \ref{thm:dico-HMP}]
{\it 1.} This part concerning the dimension $2\leq N\leq 6$ follows from Theorem \ref{prop:HMP} and the instability of the radial profile $(1,0)$ for $I$ in $\mcB\cap \big\{(f,g)\, :\,  f^2 + g^2 = 1\big\}$ as explained above.

{\it 2.} This part for dimension $N\geq 7$ follows the ideas in \cite{JagKaul}. More precisely, calling $X=(x,z)$ the variable in $\Omega$, we have 
as in the proof of Theorem \ref{prop:HMP} for every 
$v\in H^1_0(B^N\times \R^n,\R^M)$ with $|v+\bar u|=1$ in $\Omega$:
\begin{align*}
\int_\Omega |\nabla (\bar u+v)|^2\,dX-&\int_\Omega |\nabla \bar u|^2\,dX =\int_\Omega \big(|\nabla v|^2- |\nabla \bar u|^2|v|^2\big)\,dX\\
&=\int_{\Omega} |\nabla_z v|^2\, dX+\int_{(0,1)^n} \, dz \int_{B^N} \big(|\nabla_x v|^2-\frac{N-1}{|x|^2} |v|^2\big)\, dx\\
&\geq \int_{\Omega} |\nabla_z v|^2\, dX+\bigg(\frac{(N-2)^2}{4}-(N-1)\bigg)\int_{\Omega} \frac{|v|^2}{|x|^2}\, dX\geq 0
\end{align*}
where we used the Hardy inequality for $v(\cdot, z)\in H^1_0(B^N, \R^M)$ for a.e. $z\in (0,1)^n$. This proves that $\bar u$ is the unique minimizing $\bS^{M-1}$-valued harmonic map in $\mcA$. Combined with Theorem \ref{prop:HMP}, we conclude that there is no escaping $\bS^{M-1}$-valued harmonic map $w$ in $\mcA$.
\end{proof}

\end{document}